%% file: main.tex
\pdfoutput=1

\DeclareSymbolFont{AMSb}{U}{msb}{m}{n}
\documentclass[11pt,a4paper,leqno,noamsfonts]{amsart}

   \makeatletter
   \renewcommand\@biblabel[1]{#1.}
   \makeatother
   \usepackage{multirow}

\usepackage{graphicx}
\usepackage{subcaption}
\usepackage{pythontex}
\usepackage{amssymb}

\usepackage{listings} 

\usepackage[bookmarks=false]{hyperref}

\newtheorem{manualtheoreminner}{Theorem}

\newenvironment{manualtheorem}[1]{%
  \IfBlankTF{#1}
    {\renewcommand{\themanualtheoreminner}{\unskip}}
    {\renewcommand\themanualtheoreminner{#1}}%
  \manualtheoreminner
}{\endmanualtheoreminner}

\newtheorem{manualconjectureinner}{Conjecture}
\newenvironment{manualconjecture}[1]{%
  \IfBlankTF{#1}
    {\renewcommand{\themanualconjectureinner}{\unskip}}
    {\renewcommand\themanualconjectureinner{#1}}%
  \manualconjectureinner
}{\endmanualconjectureinner}

\usepackage{listings}
\usepackage{color}

\definecolor{dkgreen}{rgb}{0,0.6,0}
\definecolor{gray}{rgb}{0.5,0.5,0.5}
\definecolor{mauve}{rgb}{0.58,0,0.82}

\usepackage{mathtools}

\usepackage[english]{babel}
\usepackage[dvipsnames]{xcolor}
\usepackage{graphicx,pifont,soul,physics} 
\usepackage[utopia]{mathdesign}
\usepackage[a4paper,top=3cm,bottom=3cm,
           left=3cm,right=3cm,marginparwidth=60pt]{geometry}

\usepackage{fancyhdr}
\usepackage{float}

\usepackage[utf8]{inputenc}
\usepackage{braket,caption,comment,mathtools,stmaryrd}
\usepackage[usestackEOL]{stackengine}
\usepackage{multirow,booktabs,microtype,relsize}

\usepackage[bookmarks=false]{hyperref}%
      \hypersetup{colorlinks,%
           citecolor=britishracinggreen,%
            filecolor=black,%
            linkcolor=cobalt,%
            urlcolor=cornellred}
      \numberwithin{equation}{section}
\usepackage[capitalise]{cleveref}
\input{macros.tex}
\DeclareMathAlphabet\BCal{OMS}{cmsy}{b}{n}

\address{Centre for Mathematical Sciences, University of Cambridge, Wilberforce Road, CB3 0WA, Cambridge, United Kingdom}

\title[An obstruction to the maximal singularity]{An obstruction to the maximal singularity of the Hilbert scheme of points on threefolds}

\author{Fatemeh Rezaee}
\email{fr414@cam.ac.uk}
\author{Parsa Saberi}
\email{ps2046@cam.ac.uk}

\begin{document}
\maketitle
\begin{abstract} 
We prove a necessary condition conjecture (\cite[Conjecture B]{Rezaee-23-Conjectures}) for the maximal singularity of the Hilbert scheme of points in 3D for a large class of degrees, which in particular implies the 1978 Brian\c{c}on-Iarrobino Conjecture for a tetrahedral degree, and vastly generalises the result in \cite{MR1}.  The novelty lies in introducing a new upper bound on the dimensions of the tangent spaces of the claimed non-maximally singular ideals, constructing canonical ideals that satisfy the claimed maximal singularity condition, providing a lower bound on the dimension of their tangent spaces, and finally comparing the two cases.
\end{abstract}

{\hypersetup{linkcolor=black}
\tableofcontents}

\section{Introduction}

Moduli spaces of sheaves are central in algebraic geometry. Among them, Hilbert schemes\footnote{In the introduction, Hilbert schemes can have positive Hilbert polynomials, while for the following sections, we only focus on Hilbert schemes of points.} are one of the most significant ones, and they parametrise subschemes of a certain type in an ambient scheme. Hilbert schemes of points have a very natural definition, and they are ubiquitous in mathematics, from algebraic geometry to enumerative geometry to combinatorial geometry to representation theory and commutative algebra. By definition, for a fixed positive integer $l$, they parametrise zero-dimensional closed subschemes of degree $l$ of an ambient variety $X$, denoted by $\Hilb^l(X)$. The complete geometry of $\Hilb^l(X)$, even when the ambient space is simply an affine space of dimension at least three, is very complex and mysterious, and there are various open problems regarding it. The case where $X$ is a threefold is of particular importance in enumerative geometry, as, for example, $\Hilb^l(\BA^3)$ is known to be the critical locus of a super-potential on a smooth variety (\cite{BBS}). Also, when $X$ is a Calabi-Yau threefold, the Hilbert scheme becomes important in enumerative geometry and curve counting theories. There is a Murphy’s Law for Hilbert schemes, which roughly says that they can have arbitrarily bad singularities that one can imagine, confirming the complexity of these spaces (\cite{HM}). Murphy's law for Hilbert schemes was proved in \cite{Vakil06} for positive degree Hilbert polynomials, and in \cite{Jelisiejew20} for degree zero Hilbert polynomials for the ambient space of dimension at least $16$, up to retraction. The Hilbert scheme of points on threefolds was studied in \cite{Ramkumar-Sammartano}, \cite{Jelisiejew-Ramkumar-Sammartano24}, \cite{HuX}, and \cite{KatzS}. For smoothness on the Hilbert scheme of pints on threefolds via broken Gorenstein structure for finite schemes, see \cite{Ramkumar-Sammartano2}. Local equations for Hilbert schemes have been studied in \cite{Ilten-Meazzini-Petracci}.  Related to threefolds, the parity conjecture \cite{PandharipandeSlides} was proved for monomial ideals in \cite{Ramkumar-Sammartano1} and disproved in \cite{GGGL23} for non-monomial ideals.  The reducibility problem of the Hilbert scheme of points was studied in \cite{Iarrobino72}, \cite{Hilb8}, \cite{fogarty}, \cite{Jardim} and  \cite{Hilb_11}. There are also various works on the enumerative geometry concerning Hilbert schemes including \cite{BBS}, \cite{BFHilb}, \cite{JKSCounterBehrend}, \cite{MNOP1}, \cite{Nesterov25}, \cite{PT}, and \cite{RicolfiSign}. Recently, classification of a generalised version of Hilbert schemes, Quot schemes, has been considered in \cite{Skjelnes-Smith-Stillman2026}. 

 To measure how badly-behaved the singularities could be, one can consider the dimension of the tangent space to the Hilbert scheme at each point. There are several prior conjectures and counterexamples regarding the maximal singularity; for example, a couple of conjectures were made in \cite{Bri-Iar} and \cite{Sturmfels}, and the counterexamples were given in \cite{Sturmfels} and \cite{Ramkumar-Sammartano}, respectively. Also, recently, a further conjecture on the maximal singularity has been given in \cite{ARZConj2024}.

 For simplicity, instead of considering $X$, we can restrict ourselves to $\BA^N$, since the singularity problem is a local one. One of the longest-standing conjectures regarding the maximal singularities of  $\Hilb^l(X)$ is the following, which predicted the unique point of the Hilbert scheme at which the maximum singularity is attained, for the case of a tetrahedral degree $l$.

\begin{manualconjecture}{A}[\cite{Bri-Iar}] \label{1978Conj} The most singular point of $\text{\:}\Hilb^{{N-1+k \choose N}}(\BA^N)$ corresponds to the ideal
 $\mathfrak{m}^k$, where $\mathfrak{m}$ is the maximal ideal in $\BC[x_1,x_2,\dots,x_N]$.
\end{manualconjecture}

This was generalised to arbitrary degree as a necessary condition for maximal singularity:

\begin{manualconjecture}{B}[Necessary condition for maximal singularity]  \cite[Conjecture B]{Rezaee-23-Conjectures}
\label{necessaryCondition} Let $R=\BC[x_1,x_2,\ldots,x_N]$. For $N\geq 3$,   let $I$ be a zero-dimensional Borel-fixed ideal of degree $l$ in $R$  given by
   \begin{align*}
       I=(x_1^{m_1},x_2^{m_2}, \ldots,x_N^{m_N},\text{mixed monomial generators}),
   \end{align*}
   where  $m_1\leq m_2\leq\ldots\leq m_N$. 
   If ${N-1+k \choose N}\leq l<{N+k\choose N}$, and $I$ corresponds to a point of $\text{\:}\mathrm{Hilb}^{l}(\BA^N)$ of maximal singularity, then $m_1=k$.
\end{manualconjecture}

The purpose of this paper is to prove this conjecture for $N=3$, for the following classes of degrees:
\begin{align}\label{Type 1}
l=&{k+2\choose 3}+j,  \text{ for $0\leq j\leq (k+3)/2$},
    \\\label{Type 2}
      l=&{k+3\choose 3}-j,  \text{ for $1\leq j\leq 4$},\\\label{Type 3}
 l=&{k+3\choose 3}-{b\choose 2}, \text{ for $b=2,3,4,k+1,k+2$}.
\end{align}

Note that there are the following overlaps: (1) $j=0$ in \eqref{Type 1}
 with $b=k+2$ in \eqref{Type 3}, (2) $j=1$ in \eqref{Type 2} with $b=2$ in \eqref{Type 3}, and (3)  $j=3$ in \eqref{Type 2} with $b=3$ in \eqref{Type 3}.

\begin{manualtheorem}{B}[Theorem \ref{Cor: ConjB}]  
\label{necessaryConditionForTypes123} Let $N=3$. For degrees \eqref{Type 1}-\eqref{Type 3}, Conjecture \ref{necessaryCondition} holds.
\end{manualtheorem}

An immediate corollary of this is the following result, which was first proved in \cite{MR1}.

\begin{manualtheorem}{A}[Corollary \ref{Cor:BIConj3D}]  
\label{BI3D}  For $N=3$, the Brian\c{c}on-Iarrobino Conjecture (Conjecture \ref{1978Conj})  holds.
\end{manualtheorem}

\subsection*{Strategy of the proof and novelty} Fix positive integers $l,m_1,k$ such that $m_1<k$, and $l$ is of types \ref{Type 1}-\ref{Type 3}. For a given zero-dimensional Borel-fixed ideal $I$ in $\BC[x,y,z]$ of degree $l$ with $m_1(I)=m_1$, we first provide an upper bound, $U_{m_1}(I)$, on the dimension of the tangent space, $T(I)$. Then, for the fixed $l$, we give a weaker upper bound $W_{m_1}(l)$ on $U_{m_1}(I)$, for any zero-dimensional Borel-fixed $I$ of degree $l$ with $m_1(I)=m_1<k$. Then, we construct a canonical ideal $\FJ_l$ of degree $l$ and $m_1(\FJ_l)=k$, and find a lower bound $V(l)$ on $T(\FJ_l)$. We also prove that $V(l)>W_{m_1}(l)$, for $m_1<k$. Therefore, overall, for a given zero-dimensional Borel-fixed ideal $I$ in $\BC[x,y,z]$ of degree $l$ with $m_1(I)=m_1<k$, we have
\begin{align*}
T(I)\leq U_{m_1}(I)\leq W_{m_1}(l)<V(l)\leq T(\FJ_l),\end{align*}
where $\FJ_l$ is a zero-dimensional Borel-fixed ideal of degree $l$ and $m_1(\FJ_l)=k$. 

Here, the novelty lies in introducing a new upper bound $U_{m_1}(I)$, constructing canonical ideals $\FJ_l$ with $m_1(\FJ_l)=k$, and finding a lower bound $V(l)$ on the dimension of the tangent space at this point (which is indeed equal to $T(\FJ_l)$), which is sufficient to prove the necessary condition conjecture for a large class of degrees. 

The method is expected to be extended to higher dimensions and other degrees, which can shed light on future studies of the Hilbert scheme singularities.

\subsection*{Acknowledgements} This is an outcome of the King's College (Cambridge) Summer Research Programme 2026; we thank the administrative support of King's College for coordination of the programme. The authors are grateful to Mark Gross for his support, and to Tony Iarrobino and Alessio Sammartano for helpful comments. FR was supported by the UKRI grant EP/X032779/1, and PS was supported by Global Talent Fund. We acknowledge the use of Macaulay2 \cite{GS} and Python.

\subsection*{Conventions} We define $\BN=\{0,1,2,3,\ldots\}$. For a zero-dimensional ideal $I$ of degree $l$, we denote by $[I]$ the corresponding point in $\Hilb^l(\BA^3)$. Throughout the paper, when we use the term `component', we basically mean a connected component. Also, by a generator of a monomial ideal $I$, we mean an element of the minimal set of generators of $I$.  We use the variables $x, y, z$ for the 3D space, unless explicitly stated otherwise. Furthermore, we use the directional terms front, back, right, left, up, and down to refer to the positive and negative directions of $x$, $y$, and $z$, respectively.

\subsection*{Notation} Let $R=\BC[x,y,z]$ be the polynomial ring, and $I$ a zero-dimensional Borel-fixed ideal in $R$. 
\begin{center}
    
         \begin{tabular}{ r l } 

                   $\mathrm{m}$:& The zero-dimensional monomial ideal of degree one generated by the variables,\\& i.e., $\mathrm{m}=(x,y,z)$.\\

          $m_i(J)$:& For an ideal $J$ and $i=1,2,3$, we define $m_i(J)$ to be the exponent of the generators\\& purely in $x,y,z$, respectively, i.e., we can write\\& $J=(x^{m_1(J)},y^{m_2(J)},z^{m_3(J)},\text{mixed monomial generators})$\\

           $h(i,j)$:&  The \emph{height} of the $x=i$, $y=j$ column of $\BN^3\setminus \tilde I$ (see Definition \ref{def: height}).\\

          $l=l(I)$:& The degree of $I$, defined as $\mathrm{hom}(R,R/{I})$.\\

          $T(I)$:& The dimension of the  tangent space to the Hilbert scheme at $[I]$, which is\\&defined as $\mathrm{hom}(I,R/{I})$.\\

   $\FC(I)$:& The hotspot of $\BN^3\setminus \tilde I$ (Definition \ref{def: Hotspot}).\\

    $\FJ_l$:& The  canonical ideal of degree $l$ (Definition \ref{Def: canonicalConfig}).\\

           \end{tabular}
     \end{center}

     \begin{center}
    
         \begin{tabular}{ r l }

  $U_{m_1}(I)$:& The  upper bound on the tangent space at $[I]$, where $I=(x^{m_1},y^{m_2},z^{m_3},$\\&$\text{mixed terms})$ is a zero-dimensional Borel-fixed ideal (Section \ref{Section: UpperBoundU}).\\

    $W_{m_1}(l)$:& An  upper bound on  $U_{m_1}(I)$, for any zero-dimensional Borel-fixed ideal\\& $I=(x^{m_1},y^{m_2},z^{m_3},\text{mixed monomial terms})$ of degree $l$ (see \eqref{eq: WeakUpperBound}).\\

  $V(l)$:& A lower bound on the dimension of the tangent space of the canonical ideal $\FJ_l$ \\&(Section \ref{Section: CanonicalConfig})
 \end{tabular}
     \end{center}

  \section{Background} In this section, we recall the definitions and tools we need in this paper.

First, we recall the key notion of Borel-fixedness: a monomial ideal $I$ in $\BC[x_1,x_2,x_3]$ is \emph{Borel-fixed} (or \emph{strongly stable}), if for any minimal generator $g$ of $I$, if $x_i|g$, then $x_j(g/x_i)$ is an element of $I$ for any $1\leq j <i$. In the rest of the paper, for simplicity, we use $x,y,z$ for $x_1,x_2,x_3$, respectively.

\subsection{Bounded connected components}   
We use the decomposition of the tangent space described in \cite{Ramkumar-Sammartano}. For a zero-dimensional Borel-fixed ideal $I$, let $\tilde I$ be the points in $\BZ^3$ that correspond to the exponents of the monomial elements of $I$, e.g., the point  $(5,2,3)$ corresponds to $x^5y^2z^3$.  We represent the points of $\BN^3\backslash \tilde I$ by cubes. See Figure \ref{Fig: Components}, left.

For a zero-dimensional Borel-fixed ideal $I$ in three variables $x,y,z$, we associate the \emph{$x$-filtration} $I=\oplus_{i=0}^{\infty}x^iJ_i$, for $J_i$ an ideal in $\BC[y,z]$. For $0\leq i\leq m_1 - 1$, we define $I_i = (\{x^i\}\times\BN^2)\backslash J_i$ as the slicing $I_i$ of $\BN^3\setminus \tilde I$ along the $x$-axis (see Figure \ref{Fig: HandS}, left). If $m_1(I)=m_1$, there are $m_1$ many slices (see Figure \ref{Fig: HandS}, left).

The following definition/proposition is crucial in the rest of this paper.

\begin{definition}\cite[Proposition 1.5]{Ramkumar-Sammartano} For a point $[I]$ in $\Hilb^l(\BA^3)$, the dimension of the tangent space to the Hilbert scheme at $[I]$ is the sum of the numbers of bounded connected components of $(\tilde I+\alpha) \setminus \tilde I$ for all vectors $\alpha$ in $\BZ^3$. (See Figure \ref{Fig: Components}, right).

\end{definition}

\begin{definition}[Size of a bounded component] We define the \textit{size of a bounded component} of $(\tilde I+\alpha)\setminus \tilde I$ as the number of unit cubes contained in the component.
\end{definition}

  \begin{figure}[H]

  \subcaptionbox*{}[.93\linewidth]{
    \includegraphics[width=\linewidth]{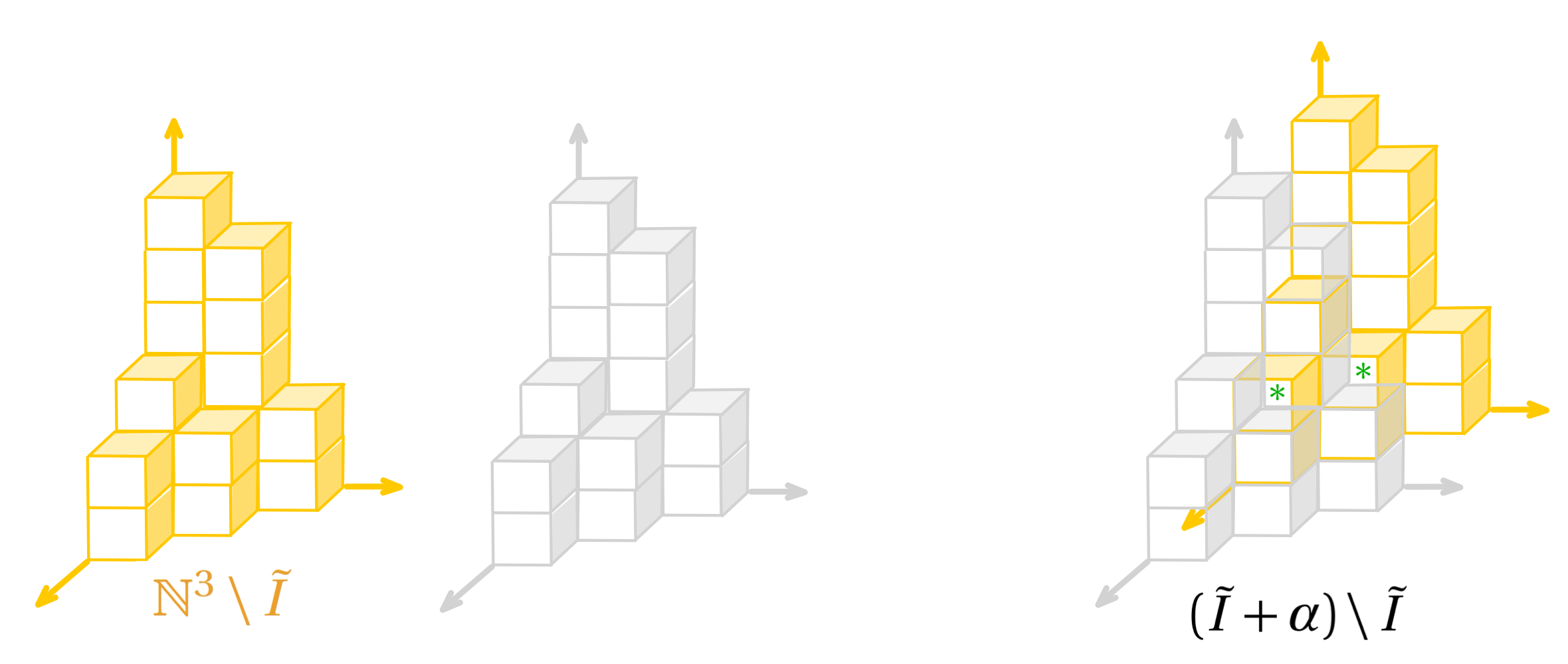}
  }
  \caption{We use a copy of  $\BN^3\setminus\tilde I$ in a different colour and move it along $\alpha$ to create $(\tilde I+\alpha)\setminus \tilde I$. In the right picture, for $\alpha=(1,-1,-1)$, the bounded components of $(\tilde I+\alpha)\setminus \tilde I$ are marked by $*$.}
  \label{Fig: Components}

\end{figure}

\begin{definition}[heigh]\label{def: height}
 For a zero-dimensional Borel-fixed ideal $I$,  we define  the \emph{height} $h(i,j)$ as follows:  for any $0\leq i<m_1(I)$ and $0\leq j<m_2(I_i)$, let $h(i, j)$ be the least positive integer such that $x^iy^jz^{h(i, j)}\in I$, i.e., $h(i, j)$ is the height of the $j$th column of $I_i$. See Figure \ref{Fig: TlessU}, left.
\end{definition}

\begin{definition}[Hotspot in $\BN^3\setminus \tilde I$]\label{def: Hotspot} For a zero-dimensional Borel-fixed ideal $I$ in three variables, we define the \emph{hotspot} $\FC(I)$ to be the set of the cubes in $\BN^3\setminus \tilde I$ which are immediately below a generator of $I$ (recall that, by our convention,  a generator is an element of the minimal set of generators). In other words,  $\FC(I)$ is the set of the top cubes of the columns of $\BN^3\setminus \tilde I$. See Figure \ref{Fig: Caravansarai}.

\begin{figure}[h]

  \subcaptionbox*{}[.94\linewidth]{
    \includegraphics[width=\linewidth]{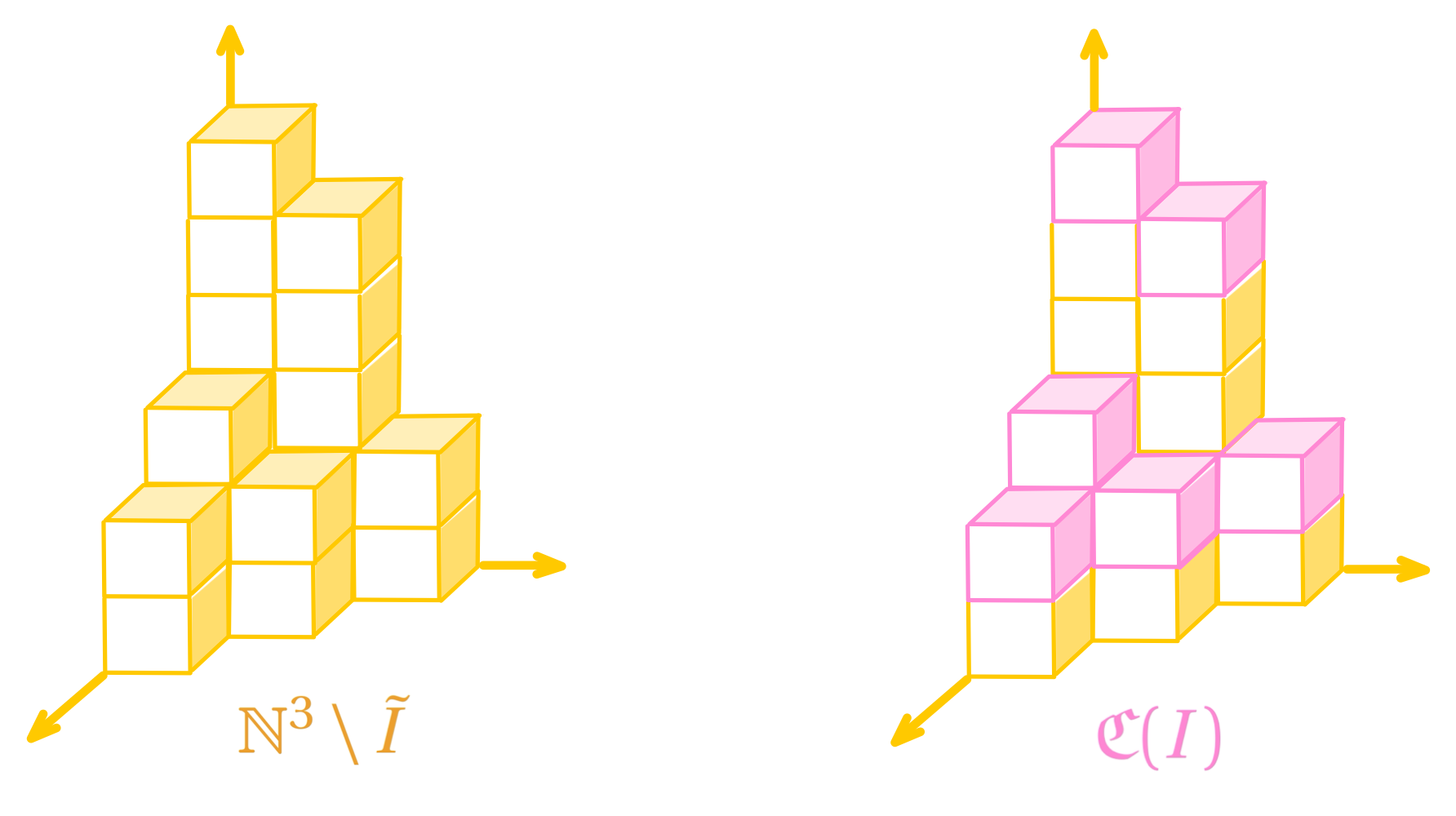}
  }
  \caption{$\BN^3\setminus \tilde I$ and the associated hotspot $\FC(I)$.}
  \label{Fig: Caravansarai}

\end{figure}
\end{definition}

\begin{definition}[Vertical distance of a cube in $\BN^3\setminus\tilde I$] For each cube in $\BN^3\setminus\tilde I$, we define its \emph{vertical distance} to be its distance from the generator above it.
\end{definition}

We now have a lemma, which later will be helpful in giving a lower bound on the tangent space.

\begin{lemma}\label{lem: Hotspot}Each vector $\alpha$ from a generator of $I$ to a cube in $\FC(I)$ represents a vector of the tangent space at $[I]$. Moreover, the size of the produced bounded component by such $\alpha$ is one.
\end{lemma}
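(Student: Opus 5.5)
The plan is to check the claim directly from the definition of the tangent space recalled from \cite[Proposition 1.5]{Ramkumar-Sammartano}. Let $g=(a,b,c)$ be the exponent vector of a minimal generator of $I$, and let $q=(i,j,h(i,j)-1)\in\FC(I)$ be a hotspot cube. Set $\alpha=q-g$. We will show that $\{q\}$ is by itself a connected component of $(\tilde I+\alpha)\setminus\tilde I$. Such a component is bounded and has size one, which gives both assertions.

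First, $q$ lies in $(\tilde I+\alpha)\setminus\tilde I$. Indeed $q=g+\alpha$ with $g\in\tilde I$, and $q\notin\tilde I$ because $q$ is a cube of $\BN^3\setminus\tilde I$. Connectivity is understood through face-adjacency, so it remains to show that none of the six neighbours $q\pm e_t$ ($t=1,2,3$) lies in $(\tilde I+\alpha)\setminus\tilde I$. We split the neighbours into two groups.

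\emph{Downward neighbours $q-e_t$.} We have $q-e_t=(g-e_t)+\alpha$. Since $g$ is a minimal generator, $g-e_t\notin\tilde I$: either it has a negative coordinate, or it is a monomial strictly dividing $g$. Hence $q-e_t\notin\tilde I+\alpha$.

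\emph{Upward neighbours $q+e_t$.} These satisfy $q+e_t=(g+e_t)+\alpha\in\tilde I+\alpha$, so for them we must show $q+e_t\in\tilde I$.
\begin{itemize}
\item For $t=3$: $q+e_3=(i,j,h(i,j))$, and $x^iy^jz^{h(i,j)}\in I$ by the definition of the height $h(i,j)$.
\item For $t=1,2$: we use Borel-fixedness. Strong stability, stated for minimal generators, extends to all monomials of $I$. The reason is that a monomial $m\in I$ is $m=g'u$ with $g'$ a generator. If $z\mid g'$ we apply the exchange to $g'$ and multiply by $u$. Otherwise $z\mid u$, and the exchanged monomial is a multiple of $g'$.
\item Applying this to $x^iy^jz^{h(i,j)}\in I$, replacing one $z$ by $x$ gives $x^{i+1}y^jz^{h(i,j)-1}\in I$, i.e.\ $q+e_1\in\tilde I$. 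Replacing one $z$ by $y$ gives $q+e_2\in\tilde I$.
\end{itemize}
This step is legitimate because $h(i,j)\geq 1$.

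Thus all six neighbours of $q$ lie outside $(\tilde I+\alpha)\setminus\tilde I$. So $\{q\}$ is a connected component of size one, it is bounded, and it contributes to the tangent space. The only step needing care is the upward horizontal neighbours. The potential obstacle there is that strong stability is only stated for generators, which is why we extend it to all monomials of $I$ before applying it to the non-minimal element $x^iy^jz^{h(i,j)}$.
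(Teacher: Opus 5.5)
Your proof is correct and takes the same approach as the paper: translating the generator $g$ onto the hotspot cube $q$ makes $\{q\}$ an isolated point of $(\tilde I+\alpha)\setminus\tilde I$, hence a bounded component of size one. Your six-neighbour check makes the paper's terse \emph{corner} and \emph{vertical distance one} reasoning precise, in particular by stating explicitly that minimality of $g$ rules out the neighbours $q-e_t$, and that Borel-fixedness (correctly extended from generators to all monomials) is what places $q+e_1$ and $q+e_2$ in $\tilde I$.
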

\begin{proof}
Since the generators correspond to the corners of the configuration of $\BN^3\setminus\tilde I$, for all vectors $\alpha$ from an arbitrary generator $g$ to an element $c$ of $\FC(I)$, the corresponding component of $(\tilde I+\alpha)\setminus\tilde I$ must be bounded. This is due to the fact that, by the definition of $\FC(I)$, the vertical distance of $c$ is one; hence the size of the bounded component  of $(\tilde I+\alpha)\setminus\tilde I$ associated to it is exactly one. Also, since the sizes of the bounded components are one, all the components are different.
\end{proof}

Now, we have an immediate corollary.

\begin{corollary}[Lower bound on the tangent space]\label{Cor: LowerBoundOnT}
    Let $I$ be a zero-dimensional Borel-fixed ideal in three variables, and let $G(I)$ be the number of generators of $I$. Then, 
    \begin{align*}
        T(I)\geq G(I)\cdot \#\FC(I).
    \end{align*}
\end{corollary}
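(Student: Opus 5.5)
The plan is to exhibit $G(I)\cdot\#\FC(I)$ distinct bounded components, taken over all translation vectors $\alpha$, and then invoke the description of $T(I)$ from \cite[Proposition 1.5]{Ramkumar-Sammartano}. By that result, $T(I)$ counts pairs $(\alpha,C)$, where $C$ is a bounded connected component of $(\tilde I+\alpha)\setminus\tilde I$. So it suffices to build an injective map from $\{\text{generators } g\}\times\FC(I)$ into this set of pairs.

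\textbf{Step 1: the map.} To a pair $(g,c)$, with $g$ a minimal generator and $c\in\FC(I)$ a cube, associate $\alpha=c-g$. By Lemma \ref{lem: Hotspot}, $(\tilde I+\alpha)\setminus\tilde I$ has a bounded component of size one. Concretely, this is the singleton $\{c\}$: the point $g+\alpha=c$ lies in $\tilde I+\alpha$ but not in $\tilde I$, and it is isolated because of two facts.
\begin{itemize}
\item The cube directly above $c$ lies in $\tilde I$, since $c$ is the top cube of its column.
\item The other neighbours of $c$ that lie outside $\tilde I$ are not in $\tilde I+\alpha$. This holds because $g$ is a corner of $\tilde I$, i.e.\ $g-e_i\notin\tilde I$ for every $i$ with $g_i>0$.
\end{itemize}
We map $(g,c)$ to the pair $(\alpha,\{c\})$.

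\textbf{Step 2: injectivity.} Suppose $(g,c)$ and $(g',c')$ have the same image. Then the components agree, so $\{c\}=\{c'\}$ and $c=c'$. The vectors also agree, so $c-g=c'-g'$, which gives $g=g'$. Hence the map is injective. Summing over all $\alpha$ then yields $T(I)\geq G(I)\cdot\#\FC(I)$.

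\textbf{Main obstacle.} The delicate point is the claim in Lemma \ref{lem: Hotspot} that the singleton really is a whole connected component. This requires every lattice neighbour of $c$ either to lie in $\tilde I$ or to lie outside $\tilde I+\alpha$.
\begin{itemize}
\item For the neighbour $c+e_3$: it lies in $\tilde I$, by the hotspot property.
\item For a neighbour $c\pm e_i$ with $i=1,2$, or $c-e_3$: membership in $\tilde I+\alpha$ would mean $g\pm e_i\in\tilde I$. In the minus directions this fails by minimality of $g$. In the plus directions, $g+e_i\in\tilde I$ always holds, so in that case one must check instead that $c+e_i\in\tilde I$ or argue otherwise.
\end{itemize}
If that verification turns out to fail, I would restrict to pairs for which it holds. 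Since the corollary is stated as an immediate consequence, however, I will take Lemma \ref{lem: Hotspot} as given, and the counting argument in Steps 1 and 2 suffices.
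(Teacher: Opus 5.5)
Your proposal is correct and follows the paper's argument: each pair consisting of a generator $g$ and a hotspot cube $c$ gives, via $\alpha=c-g$, a size-one bounded component by Lemma \ref{lem: Hotspot}, and your injectivity check ($\{c\}$ recovers $c$, then $g=c-\alpha$) is a cleaner version of the paper's remark that these components are all different. The point you left open does hold, because $I$ is Borel-fixed: since $c+e_3\in\tilde I$, i.e.\ $x^{c_1}y^{c_2}z^{c_3+1}\in I$, strong stability gives $x^{c_1+1}y^{c_2}z^{c_3},\,x^{c_1}y^{c_2+1}z^{c_3}\in I$, so $c+e_1,c+e_2\in\tilde I$ and $\{c\}$ is a genuine component for every pair, with no need to discard any.
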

  \section{An upper bound on the tangent space}\label{Section: UpperBoundU} In this section, for a zero-dimensional Borel-fixed ideal $I$ of degree $l$ with $m_1(I)=m_1$, we define an upper bound $U_{m_1}(I)$ on the dimension of the tangent space to the Hilbert scheme at $[I]$.
  
  For a zero-dimensional Borel-fixed ideal $I$ of degree $l$ with $m_1(I)=m_1$, we define
  \begin{align*}
      U_{m_1}(I):=(2m_1+1)l-\sum_{0\leq j\leq i\leq m_1-1} H_{i,j}(I)-\sum_{0\leq j\leq i\leq m_1-1} S_{i,j}(I),
  \end{align*}
  where $H_{i,j}(I)$ is the size of the piece of the $j$th slice which is strictly above the highest cube of the $i$th slice, and $ S_{i,j}(I)$ is the size of the piece of the $j$th slice which is strictly to the right of the rightmost cubes of the $i$th slice, for $0\leq j \leq i\leq m_1-1$ (see Figure \ref{Fig: HandS}). Observe that both $H_{i,j}(I)$ and $S_{i,j}(I)$ are zero for $i = j$.

  \begin{figure}[H]

  \subcaptionbox*{}[.99\linewidth]{
    \includegraphics[width=\linewidth]{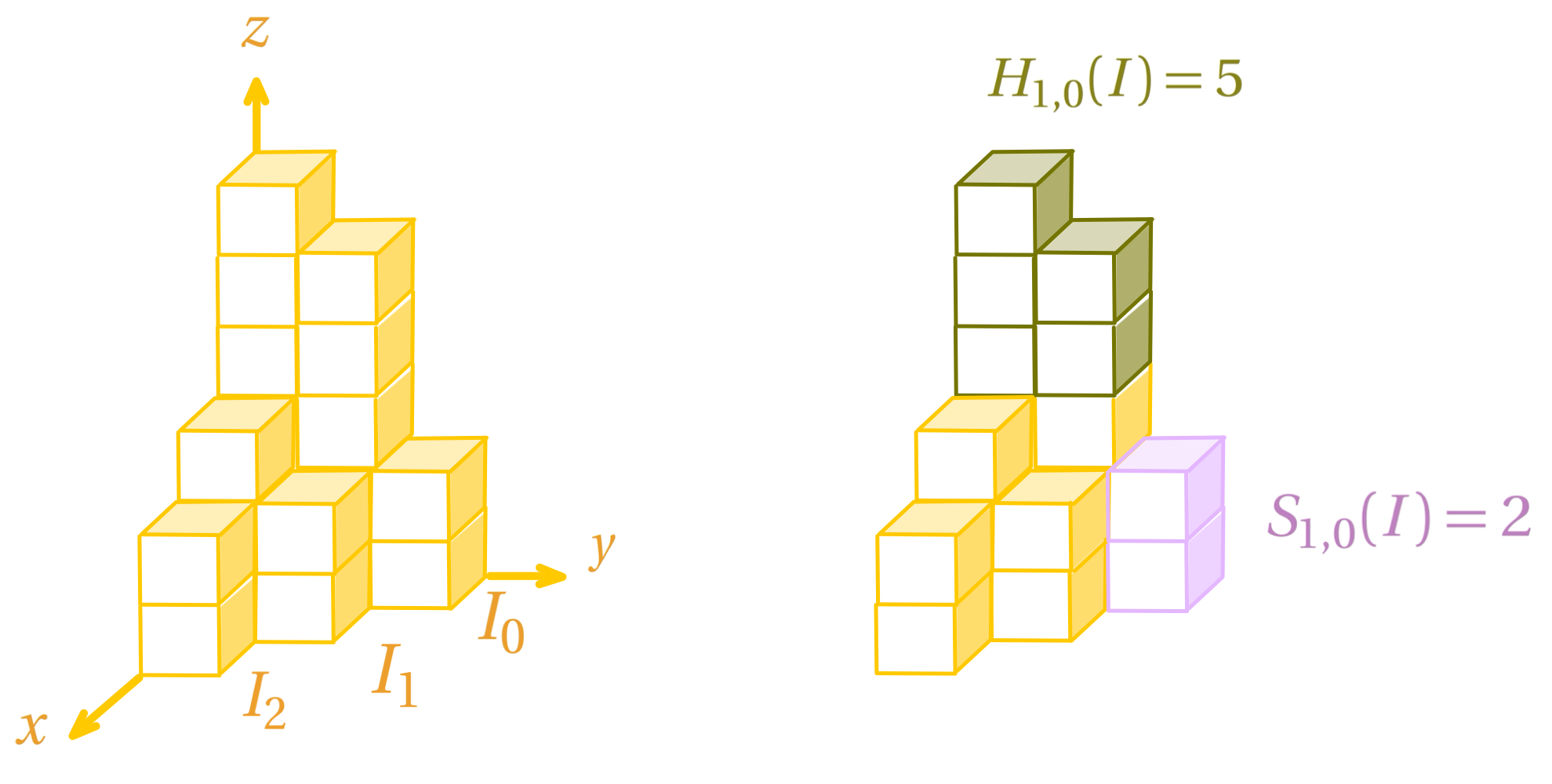}
  }
  \caption{$T(I)=98<U_3(I)=(2\cdot3+1)20-(1+5+7)-(2+2+7)=116$.}
  \label{Fig: HandS}

\end{figure}

\begin{lemma}[$U_{m_1}(I)$ is an upper bound on $T(I)$]\label{lem: TLessU}
    For a zero-dimensional Borel-fixed ideal $I$ with $m_1(I)=m_1$, we have $T(I)\leq U_{m_1}(I)$.
\end{lemma}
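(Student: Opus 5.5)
The plan is to prove the bound by a charging argument. Each bounded component of $(\tilde I+\alpha)\setminus\tilde I$ will be assigned to a pair $(c,a)$, where $c$ is a cube of $\BN^3\setminus\tilde I$ and $a$ ranges over a set of $2m_1+1$ possible $x$-shifts. I will then show that the pairs counted by $H_{i,j}$ and $S_{i,j}$ never receive a component.

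\emph{Step 1: restriction of $\alpha$ and the charging map.} Write $\alpha=(a,b,c)$. I first check that a bounded component forces $|a|\leq m_1$. If $|a|>m_1$, the shifted configuration meets $\BN^3\setminus\tilde I$ in no slice, or only in unbounded pieces along the $x$-axis, so no bounded component appears. Next, every bounded component $C$ lies inside $\BN^3\setminus\tilde I$. I charge $C$ to the pair $(c_C,a)$, where $c_C$ is a canonically chosen cube of $C$: for instance the lexicographically smallest cube, ordered by $x$, then $y$, then $z$.

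\emph{Step 2: injectivity.} I claim that, for fixed $a$, distinct components (over all $b,c$) are charged to distinct cubes. For a fixed $\alpha$, distinct components are disjoint, so their charged cubes already differ. The harder case is two vectors $\alpha=(a,b,c)$ and $\alpha'=(a,b',c')$ charged to the same cube. Here I would use Borel-fixedness. Slice by slice, the components are unions of the 2D pieces $((I_{x-a}+(b,c))\setminus I_x)$, and the 2D slices are nested: $I_0\supseteq I_1\supseteq\cdots$. The lexicographically minimal cube of a bounded component should sit at a specific corner relative to the shifted generator, which determines $(b,c)$ from the cube and $a$. This is the 3D analogue of the standard 2D fact that $\hom(J,S/J)=2\,\mathrm{colength}(J)$, and it is the step I expect to be the main obstacle.

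If the naive lexicographic choice fails, I would instead charge to the cube of $C$ lying directly below the corner of the shifted generator that bounds $C$ from above. That corner is unique once $a$ and the cube are fixed. This yields $T(I)\leq(2m_1+1)l$.

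\emph{Step 3: removing the $H$ and $S$ terms.} Fix $0\leq j<i\leq m_1-1$ and a cube $c$ of slice $j$ lying strictly above the highest cube of slice $i$. Take the shift $a$ that brings slice $i$ of the translated copy onto slice $j$, with the sign convention chosen so that the cubes of $C$ in slice $j$ come from slice $i$. No bounded component charged to $(c,a)$ can exist: above the top of slice $i$ the translated complement extends upward without obstruction in the $z$-direction, so the component through $c$ is unbounded. Symmetrically, a cube of slice $j$ strictly to the right of the rightmost cube of slice $i$ gives an unbounded component in the $y$-direction.

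To avoid double subtraction, I pair the $H$-cases with shift $a=i-j$ and the $S$-cases with shift $a=-(i-j)$. This is arranged by letting the charging cube be taken in the ``source'' slice for one sign and in the ``target'' slice for the other. The subtracted pairs $(c,a)$ are then pairwise distinct. Subtracting them from the $(2m_1+1)l$ available pairs gives $T(I)\leq U_{m_1}(I)$. The consistency of this sign bookkeeping with the charging rule of Step 2 is the delicate point I would verify carefully, testing it on the example in Figure \ref{Fig: HandS}.
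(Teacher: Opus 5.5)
\textbf{There is a genuine gap: your Step 2 is false.} Step 3 is phrased in terms of that charging, so the argument does not close. For a fixed $x$-shift $a$, the bounded components over all $(b,c)$ can outnumber the $l$ cubes. So no charging rule, lexicographic or your fallback, can be injective for fixed $a$.

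\emph{Counterexample.} Take $I=(x,y^2,yz,z^2)$, so $m_1=1$ and $l=3$.
For $\alpha=(0,\alpha_2,\alpha_3)$, every point of $\BN^3$ with $x\geq 1$ lies in $\tilde I$. Hence the bounded components are exactly those of the planar problem for $J=(y,z)^2\subset\BC[y,z]$, and there are $2\cdot 3=6$ of them. This is precisely the 2D identity you quote, and it refutes your injectivity rather than supporting it.
The shift $a=1$ gives no bounded component, and $a=-1$ gives $3$. So the total $9=3l=U_1(I)$ splits as $3,6,0$ over $a=-1,0,1$.
In general only $-m_1\leq \alpha_1\leq m_1-1$ contribute, and their contributions are uneven. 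The factor $2m_1+1$ does not arise as ``$2m_1+1$ shifts, at most $l$ each''.

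\textbf{The missing idea} is a slice-by-slice count in which each slice is charged from both the target and the source side. This is what the paper does.
Fix $\alpha_1$ and the plane $x=i$, and count the pieces of $(\tilde I+\alpha)\setminus\tilde I$ in that plane that are bounded within the plane.
\begin{itemize}
\item For each fixed $\alpha_2\geq 0$ there are at most $h(i,\alpha_2)$ such pieces. Index them by the bottom-rightmost cube of the piece.
\item For each fixed $\alpha_2<0$ there are at most $h(i-\alpha_1,-\alpha_2-1)$. Index them by the cube of the shifted copy immediately left of the piece's top cube.
\end{itemize}
Summing over $\alpha_2$ gives at most $l_i+l_{i-\alpha_1}$, a column-height count from the target slice plus one from the source slice.

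Each bounded component is then counted only at its backmost slice.
\begin{itemize}
\item For $\alpha_1<0$ this forces $i\leq m_1+\alpha_1$. The pieces with $\alpha_2\geq 0$ and $\alpha_3<h(i,\alpha_2)-h(i-\alpha_1-1,0)$ attach to the slice $x=i-1$, or to the unbounded part if $i=0$. This gives the subtraction $H_{i-\alpha_1-1,i}$.
\item For $\alpha_1\geq 0$, the columns of the source slice with no column of $I_i$ beside them give the subtraction $S_{i,i-\alpha_1}$.
\end{itemize}
Each $l_j$ then appears $m_1$ times over $\alpha_1<0$ and $m_1+1$ times over $\alpha_1\geq 0$. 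That is the true origin of $(2m_1+1)l$.

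\textbf{Corrections to Step 3.} The discarded $H$-pieces need not lie in unbounded components. They are dropped because their component has already been counted at an earlier slice, not because it is unbounded. Also, $H_{i',j}$ is attached to a shift of absolute value $i'-j+1$, not $i'-j$, because the comparison is with the slice just behind the source slice. So your proposed pairing $a=\pm(i-j)$ does not match the actual mechanism either.
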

\begin{proof}

    For the vectors $\alpha = (\alpha_1, \alpha_2, \alpha_3)$  with $\alpha_1<-m_1$, we see that $(\tilde I + \alpha)\backslash\tilde I$ is connected and unbounded. For those with $m_1\leq\alpha_1$, we observe that $(\tilde I + \alpha)\backslash\tilde I$ yields either a single unbounded component or no component at all (the latter occurs when both $\alpha_2\leq 0$ and $\alpha_3\leq 0$). Therefore, we only have to consider $\alpha$'s with $-m_1\leq\alpha_1<m_1$. For a bounded component of $(\tilde I + \alpha)\backslash\tilde I$, define its backmost slice as the one with the least $x$-coordinate. For each $\alpha_1$, we give an upper bound on the number of  bounded components by counting their backmost slices, and show that these upper bounds add up to $U_{m_1}(I)$.

  First, consider the case $-m_1\leq\alpha_1<0$. The backmost slice must have $x$-coordinate between $0$ and $m_1 +\alpha_1$ inclusive, because any element of $(\tilde I + \alpha)\backslash\tilde I$ with negative $x$-coordinate is in the unbounded component, and any element with $x$-coordinate bigger than $m_1 +\alpha_1$ is either connected to a cube with smaller $x$-coordinate or is in the unbounded component; in both cases, it cannot be in the backmost slice. Observe that for the backmost slice to have  $x$-coordinate $m_1+\alpha_1$, both $\alpha_2$ and $\alpha_3$ must be non-negative. This is because if either is negative, the $x = m_1 + \alpha_1$ cut of $(\tilde I + \alpha)\backslash\tilde I$ would be connected and unbounded; this is due to the fact that the negativity of $\alpha_2$ or $\alpha_3$ causes the slice $I_{m_1 + \alpha_1}$ to connect to the unbounded component from the left or below, respectively. For each $0\leq i<m_1$ and $0\leq j<m_2(I_i)$, recall that $m_2(I_i)$ is the width of $I_i$, and $h(i, j)$ is the height of the $j$th column of $I_i$ (see Figure \ref{Fig: TlessU}). For a backmost slice to appear within $I_{m_1+\alpha_1}$, we must have $0\leq\alpha_2< m_2(I_{m_1+\alpha_1})$, since if $\alpha_2\geq m_2(I_{m_1+\alpha_1})$, then $(\tilde I + \alpha)\backslash\tilde I$ would have empty intersection with the $x = m_1+\alpha_1$ plane. For any $\alpha_2$ with $0\leq\alpha_2< m_2(I_{m_1+\alpha_1})$, for the backmost slice to be within $I_{m_1+\alpha_1}$, we must have $\max(0, h(m_1 + \alpha_1, \alpha_2)-h(m_1-1, 0))\leq\alpha_3<h(m_1 + \alpha_1, \alpha_2)$ for the following reason: (1) If $\max(0, h(m_1 + \alpha_1, \alpha_2)-h(m_1-1, 0))>\alpha_3$, then the top cube of the $\alpha_2$-th column of $m_2(I_{m_1 + \alpha_1})$ would be connected to the cube behind it if $\alpha_1 + m_1 > 0$, and would be connected to the unbounded component if $\alpha_1 + m_1 = 0$; thus, it would violate either being the backmost slice or being bounded. (2) If $\alpha_3\geq h(m_1 + \alpha_1, \alpha_2)$, then again $(\tilde I + \alpha)\backslash\tilde I$ would have empty intersection with the $x = m_1+\alpha_1$ plane. Hence, the number of the bounded components of $(\tilde I + \alpha)\backslash\tilde I$ with the backmost slice within $I_{m_1 + \alpha_1}$ has upper bound 
   \begin{align*}
      \sum_{\alpha_2 = 0}^{m_2(I_{m_1 + \alpha_1}) - 1} h(m_1 + \alpha_1, \alpha_2) - \max(0, h(m_1 + \alpha_1, \alpha_2)-h(m_1-1, 0)) = l_{m_1 + \alpha_1} - H_{m_1 - 1, m_1 + \alpha_1}(I),
    \end{align*}
    by definition of $H_{m_1 - 1, m_1 + \alpha_1}(I)$.

  \begin{figure}[h]

  \subcaptionbox*{}[.99\linewidth]{
    \includegraphics[width=\linewidth]{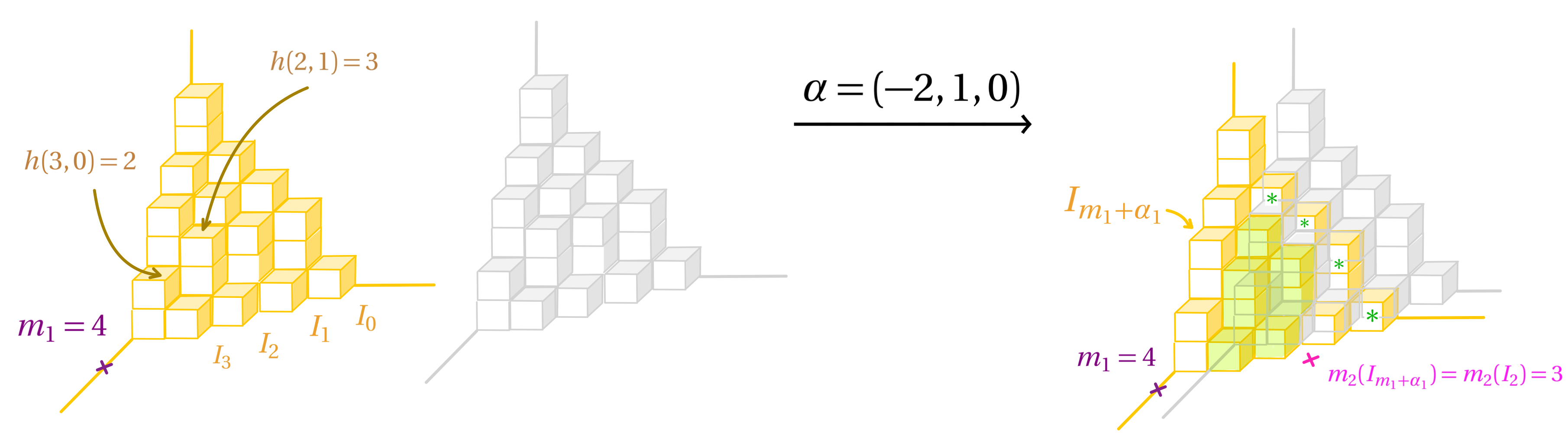}
  }
  \caption{Left: $\BN^3\setminus \tilde I$ and its copy to move along $\alpha$. Right: The bounded components of $(\tilde I + \alpha)\backslash\tilde I$. The one contained in more than one slice is highlighted; others are marked by a $*$.}
  \label{Fig: TlessU}

\end{figure}
    
    Now, for $0\leq i<m_1 + \alpha_1$, we give an upper bound on the number of bounded components of $(\tilde I + \alpha)\backslash\tilde I$ with the backmost slice within $I_i$. Since the backmost slice of a bounded component must be bounded-in-2D within its $x$-plane (in this case, the $x = i$ plane), we analyse the bounded-in-2D components. We show that the interaction of $I_i$ and $I_{i - \alpha_1} + \alpha$ produces $l_i + l_{i - \alpha_1}$ many bounded-in-2D components, from which at least $H_{i - \alpha_1 - 1, i}(I)$ many are connected to either a bounded component in the previous $x$-slice or the unbounded component; so the upper bound $l_i + l_{i - \alpha_1} - H_{i - \alpha_1 - 1, i}(I)$ will be obtained. To show that the number of the bounded-in-2D components is $l_i + l_{i - \alpha_1}$, we consider two cases $\alpha_2<0$ and $\alpha_2\geq0$.

    \begin{itemize}
         \item[(I)] $\alpha_2<0$: In this case, we must have $\alpha_2\geq-m_2(I_{i-\alpha_1})$; otherwise, the unbounded component would fill the space between $I_i$ and $I_{i - \alpha_1} + \alpha$, and therefore no bounded component will be produced. We claim that for each $0>\alpha_2\geq-m_2(I_{i-\alpha_1})$, there are $h(i - \alpha_1, -\alpha_2-1)$ many components. Observe that $I_i$ is strictly to the right of the $(-\alpha_2 - 1)$-th column of $I_{i - \alpha_1} + \alpha$, and therefore no bounded-in-2D component is (partially or completely) above the $(-\alpha_2 - 1)$-th column of $I_{i - \alpha_1} + \alpha$. This is due to the fact that otherwise, it would be connected to the unbounded component on the left, via the element of the unbounded component that is on the top of the $(-\alpha_2 - 1)$-th column of $I_{i - \alpha_1} + \alpha$ (see Figure \ref{Fig:TlessU1}, (1)). Hence, the top cube of a bounded-in-2D component must not exceed the height of this column. It should be below this column by a distance between $0$ and $h(i - \alpha_1, -\alpha_2 - 1) - 1$, inclusive (it cannot be completely below $I_{i - \alpha_1} + \alpha$, as otherwise it would not be in $\tilde I + \alpha$). We claim that for each of these distances, there is exactly one such component. The top cube of each bounded-in-2D component (which is unique due to Borel-fixedness) is immediately to the right of some cube of $I_{i - \alpha_1} + \alpha$, since it must not be connected to the unbounded component on the left. Since each component is strictly to the right of the $(-\alpha_2 - 1)$-th column, there are exactly $h(i - \alpha_1, -\alpha_2 - 1)$ many cubes in $I_{i - \alpha_1} + \alpha$ for which there might be a top cube to their right. For each of those, for exactly one $\alpha_3$ the top cube of the corresponding column in $I_i$ is adjacent to its right, and that top cube would be the top cube of its component. Note that since by Borel-fixedness, we have $m_2(I_i)>m_2(I_{i - \alpha_1})$, there is always a corresponding column of $I_i$ to the right of these $h(i-\alpha_1,-\alpha_2-1)$ many cubes in $I_{i - \alpha_1} + \alpha$, and thus our claim is proved. Therefore, the vectors with $\alpha_2 < 0$ produce $\sum_{\alpha_2 = -m_2(I_{i - \alpha_1})}^{-1} h(i - \alpha_1, -\alpha_2 - 1) = l_{i - \alpha_1}$ many components.

        \item[(II)]   $\alpha_2\geq 0$: In this case, we prove that the vectors with $\alpha_2\geq 0$ yield $l_i$ many components. Observe that we must have $\alpha_2 < m_2(I_i)$ to have a bounded-in-2D component, as otherwise $I_i\cap(\tilde I + \alpha) = \varnothing$. We claim that for each $0\leq\alpha_2<m_2(I_i)$, exactly $h(i, \alpha_2)$ many components are produced. For each $I_i$ and $\alpha_2$, define the margin cubes in $I_i$ to be the ones in each of the $j$th columns of $I_i$ for $j\geq\alpha_2$, to be the ones with no cube adjacent to their right (see Figure \ref{Fig:TlessU1}, (2)). There are $h(i, \alpha_2)$ many margin cubes. For each bounded-in-2D component, consider the rightmost cube in its bottom row. We show that a cube is the bottom-rightmost cube of a bounded-in-2D component if and only if it is a margin cube. The `only if' direction is clear. To show the other direction, we specify the component for which a given  margin cube is the bottom-rightmost cube. If, with fixed $\alpha_1$ and $\alpha_2$, there is a cube of $I_{i - \alpha_1} + \alpha$ with the same $x$ and $y$ coordinates as this margin cube, the component is the one for which the top cube of this column of $I_{i - \alpha_1} + \alpha$ is immediately below the margin cube. If there is no cube of $I_{i - \alpha_1}$ with the same $x$ and $y$ coordinates as the margin cube, then our desired component is the one for which the margin cube and the bottom row of $I_{i - \alpha_1} + \alpha$ have the same $z$-coordinate. Therefore, our claim follows. Hence, the vectors with $\alpha_2\geq 0$ produce $\sum_{\alpha_2 = 0}^{m_2(I_i) - 1} h(i, \alpha_2) = l_i$ many components.  
    \end{itemize}

  \begin{figure}[h]
  \subcaptionbox*{(1) $\alpha=(-1,-1,1)$. In the highlighted component, the cube marked with a square is connected to the unbounded component from the left. The cubes marked with circles are connected to either a bounded component in the previous $x$-slice (the first two) or the unbounded component (the backmost one).}[.45\linewidth]{%
    \includegraphics[width=\linewidth]{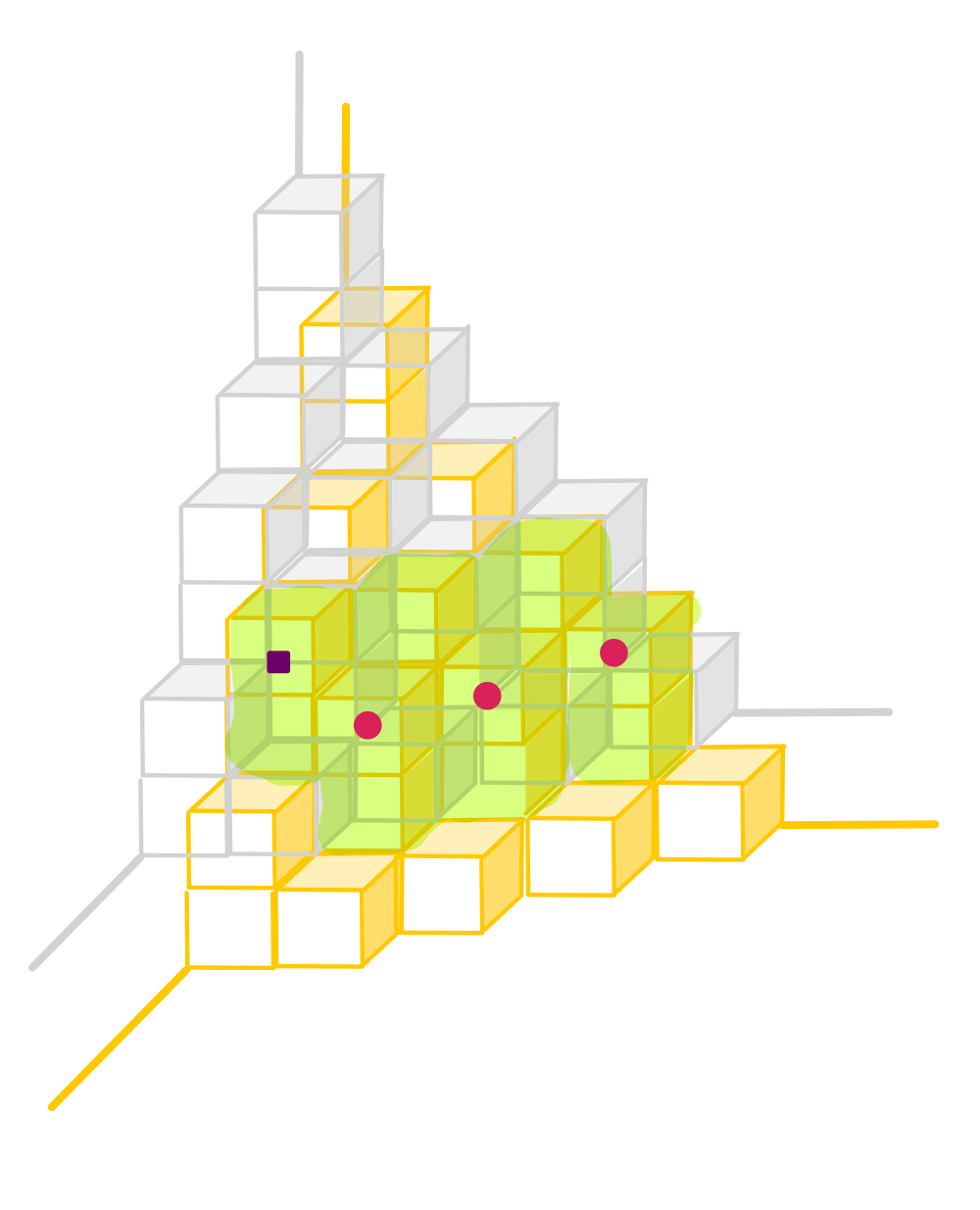}%
  }%
  \hskip7.0ex
  \subcaptionbox*{(2) The margin cubes of $I_0$ with $\alpha_2 = 2$.}[.45\linewidth]{%
    \includegraphics[width=\linewidth]{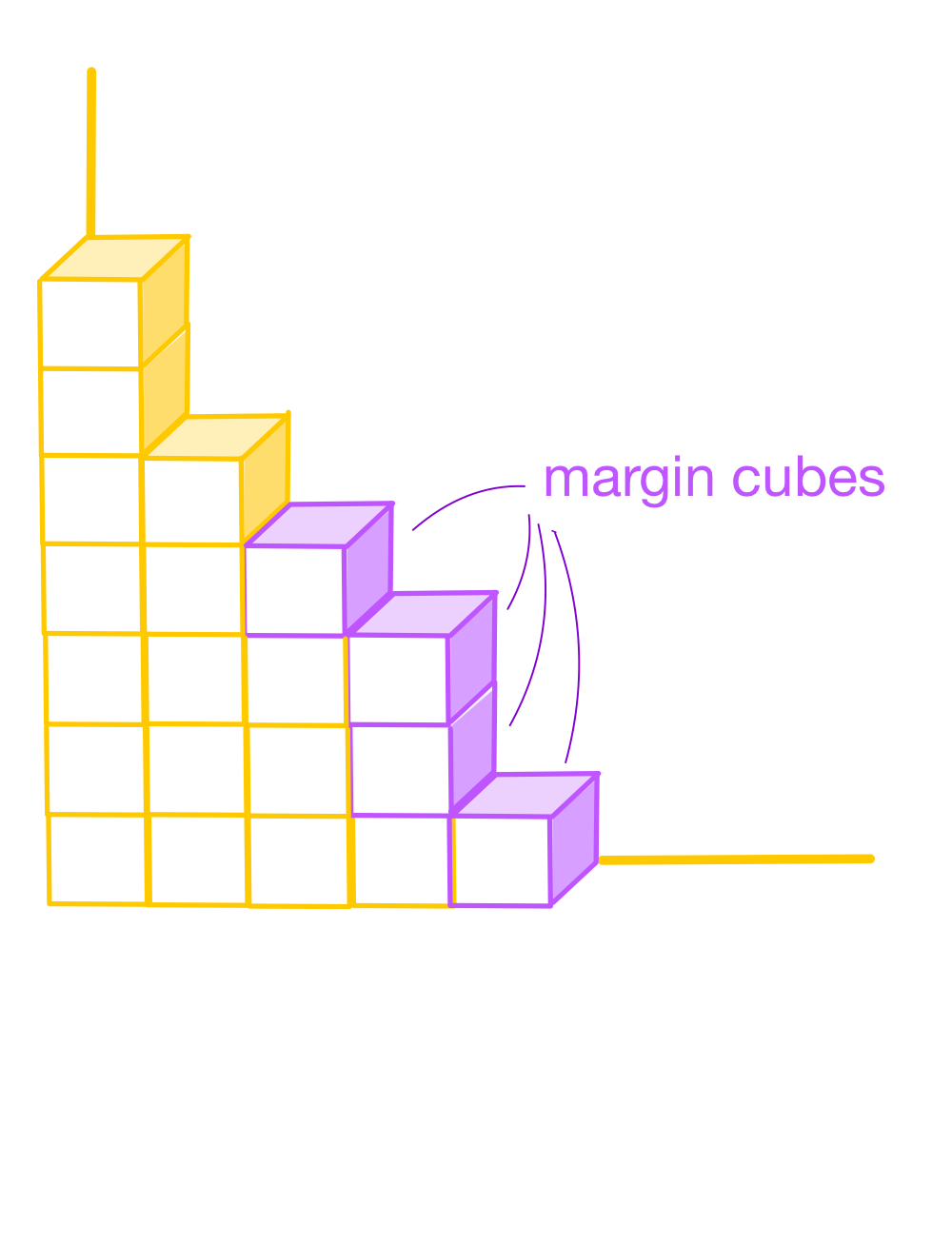}%
  }
  \caption{}
  \label{Fig:TlessU1}
\end{figure}
    
  Finally, we prove that at least $H_{i - \alpha_1 - 1, i}(I)$ many of these $l_i + l_{i - \alpha_1}$ components are connected to either a bounded component in the previous $x$-slice or the unbounded component. The argument is similar to that of the case of the backmost slices within $I_{m_1 + \alpha_1}$: for each $0\leq\alpha_2 < m_2(I_i)$ and $0\leq\alpha_3<h(i, \alpha_2) - h(i - \alpha_1 - 1, 0)$, a bounded-in-2D component produced by the interaction of $I_i$ and $I_{i - \alpha_1} + \alpha$ is connected to the unbounded component if $i - \alpha_1 -1 = 0$, and to a component of the previous $x$-slice, otherwise. This connection occurs above the top cube of the $0$th column of $I_{i - \alpha_1 - 1}$. Therefore, all of our claims regarding this case are proved and we obtain this upper bound for the number of bounded components of $(\tilde I + \alpha)\backslash\tilde I$ for the vectors with $\alpha_1 < 0$:
  \begin{equation}
    \begin{aligned}\label{eq:eq1}
     & \sum_{\alpha_1 = -m_1}^{-1} \Bigg(l_{m_1 + \alpha_1} - H_{m_1 - 1, m_1 + \alpha_1}(I) + \sum_{i = 0}^{m_1 + \alpha_1 - 1} \big(l_i + l_{i-\alpha_1} - H_{i - \alpha_1 - 1, i}(I)\big)\Bigg)\\=&m_1\sum_{j = 0}^{m_1 - 1}l_j - \sum_{0\leq j\leq i\leq m_1-1}H_{i, j}(I).
    \end{aligned}
\end{equation}

   Now, we consider the case $0\leq \alpha_1<m_1$. For each $0\leq \alpha_1<m_1$ and each $\alpha_1\leq i<m_1$, we claim that there are at most $l_i + l_{i - \alpha_1} - S_{i, i - \alpha_1}(I)$ many bounded components with the backmost slice within $I_i$. The proof is similar to the proofs of (I) and (II) above. For the vectors with $0\leq\alpha_2$, the exact proof of (II) works: these vectors produce $l_i$ many bounded-in-2D components. For the vectors with $0 > \alpha_2$, there is a slight difference with (I): we do not have the inequality $m_2(I_i) > m_2(I_{i - \alpha_1})$. We show that for these vectors, at most $l_{i - \alpha_1} - S_{i, i - \alpha_1}(I)$ many bounded-in-2D components are produced. The proof is similar to that of case I, with the consideration that since in this case we have $m_2(I_i)\leq m_2(I_{i - \alpha_1})$, there is a corresponding column of $I_i$ to the right of each column $j$ of $I_{i - \alpha_1} + \alpha$ with $-\alpha_2 - 1\leq j\leq\min(-\alpha_2 + m_2(I_i) - 2, m_2(I_{i - \alpha_1}) - 1)$. So, the number of components produced by the vectors with $0 > \alpha_2$ has the upper bound
    \begin{align*}
      &\sum_{\alpha_2 = -m_2(I_{i - \alpha_1})}^{-m_2(I_{i - \alpha_1}) + m_2(I_i) - 1} \big(h(i - \alpha_1, -\alpha_2 - 1)\big) \\+
      &\sum_{\alpha_2 = -m_2(I_{i - \alpha_1}) + m_2(I_i)}^{-1} \big(h(i - \alpha_1, -\alpha_2 - 1) - h(i - \alpha_1, -\alpha_2 - 1 + m_2(I_i)\big)= l_{i - \alpha_1} - S_{i, i - \alpha_1}(I),
    \end{align*}
    by definition of $S_{i, i - \alpha_1}(I)$. This is an upper bound and not necessarily an exact count, because, unlike the case  (I) above, the counted `components' might be connected to the unbounded component from the below of $I_i$. Overall, for the vector with $\alpha_1\geq0$, the number of produced components has the upper bound
    \begin{align}\label{eq: eq2}
      \sum_{\alpha_1 = 0}^{m_1 - 1} \sum_{i = \alpha_1}^{m_1 - 1} \big(l_i + l_{i - \alpha_1} - S_{i, i - \alpha_1}(I)\big) = (m_1 + 1)\sum_{j = 0}^{m_1 - 1}l_j - \sum_{0\leq j\leq i\leq m_1-1} S_{i, j}(I).
    \end{align}

    Adding up the upper bounds for cases $\alpha_1 < 0$ and $\alpha_1\geq0$ (in \eqref{eq:eq1} and \eqref{eq: eq2}), we get
    \begin{align*}
        m_1\sum_{j = 0}^{m_1 - 1}l_j - \sum_{0\leq j\leq i\leq m_1-1}H_{i, j}(I) + (m_1 + 1)\sum_{j = 0}^{m_1 - 1}l_j - \sum_{0\leq j\leq i\leq m_1-1} S_{i, j}(I)\\=(2m_1 + 1)l - \sum_{0\leq j\leq i\leq m_1-1}H_{i, j}(I) - \sum_{0\leq j\leq i\leq m_1-1} S_{i, j}(I) = U_{m_1}(I).
    \end{align*}

\end{proof}

\section{Canonical Configurations}\label{Section: CanonicalConfig}

  For a fixed degree ${k+3\choose 3}-{b\choose 2} \leq l<{k+3\choose 3}-{b-1\choose 2}$ with $2\leq b\leq k+2$, we construct a canonical ideal $\mathfrak{J}_l$ with $m_1(\mathfrak{J}_l)=k$, such that $G(\FJ_l)={k+3\choose 2}-b$  and $\#\FC(\FJ_l)={k+2\choose 2}-b+1$. Then, we show that the dimension of the Hilbert scheme at $[\FJ_l]$ is at least
  \begin{align*}
     V(l):
    =&G(\FJ_l)\cdot \#\FC(\FJ_l)+\binom{k+3-b}{2}+3\Big(l-{k+3\choose 3}+{b \choose 2}\Big)\\
     =&\Big({k+3\choose 2}-b\Big)\cdot\Big({k+2\choose 2}-b+1\Big)+\binom{k+3-b}{2}+3\Big(l-{k+3\choose 3}+{b \choose 2}\Big).
  \end{align*}

  The purpose of constructing such a configuration is to show that for a fixed degree $l$ and each $1\leq m_1<k$, there is a canonical point $[\FJ_l]$ in $\Hilb^l(\BA^3)$, with $T(\mathfrak{J}_l)$ strictly greater than a weak upper bound $W_{m_1}(l)$, which will be introduced in the following section.

  \begin{definition}[Canonical configuration/ideal $\FJ_l$] \label{Def: canonicalConfig}Starting from the tetrahedron configuration of degree $l={k+2\choose 3}$, we can increase the degree by $1$ by adding an additional cube on the top of the tetrahedron. Then, we add the next cube on the top of the next column on the left in the 0th slice, and continue this until we obtain the ideal $(y,z)^{k+1}$ as the 0th slice. At this stage, $l={k+2 \choose 3}+k+1$. Then, we complete the 1st slice similarly by adding a cube on the top of the columns each time. We can continue this until we complete the $(k-1)$-th slice. At this stage, the degree is ${k+3 \choose 3}-1$. Each of these configurations is called the \emph{canonical configuration} or the \emph{canonical ideal} of the corresponding degree $l$. See Figure \ref{Fig: CanonicalConfig}. 
                 \begin{figure}[H]

  \subcaptionbox*{}[.98\linewidth]{    \includegraphics[width=\linewidth]{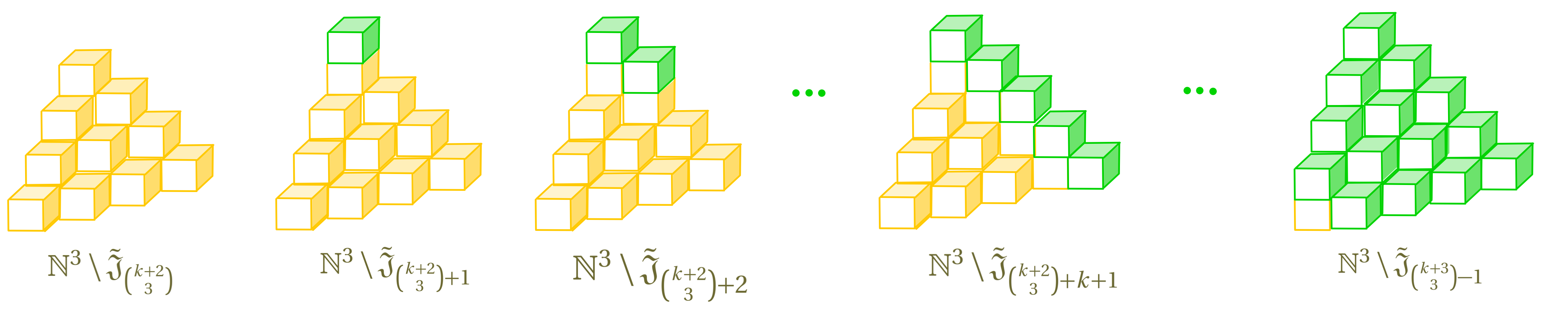}}
  \caption{Constructing the canonical configuration/ideal of each degree ${k+2 \choose 3}\leq l<{k+3 \choose 3}$ from $\mathrm{m}^k$.}
  \label{Fig: CanonicalConfig}

\end{figure}

  \end{definition}

  \begin{lemma}\label{lem: VLessT}
      For the canonical configuration $\FJ_l$ of degree $l$, we have $V(l)\leq T(\FJ_l)$.
  \end{lemma}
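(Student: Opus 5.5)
The natural starting point is Corollary \ref{Cor: LowerBoundOnT}, which already gives $T(\FJ_l)\geq G(\FJ_l)\cdot\#\FC(\FJ_l)$. The plan therefore has two parts. First, verify the two counts claimed at the start of Section \ref{Section: CanonicalConfig}. Second, exhibit $\binom{k+3-b}{2}+3\bigl(l-\binom{k+3}{3}+\binom{b}{2}\bigr)$ further pairs $(\alpha,\text{bounded component})$ that are distinct from the unit-size components of Lemma \ref{lem: Hotspot} and from each other.

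\textbf{Step 1: the shape of $\FJ_l$.} Write $e:=l-\binom{k+3}{3}+\binom{b}{2}$, so that $0\leq e<b-1$. From Definition \ref{Def: canonicalConfig}, the slices of $\FJ_l$ fall into three groups:
\begin{itemize}
\item the first $k+2-b$ slices are ``completed'', i.e.\ the $i$th slice is the complement of $(y,z)^{k+1-i}$;
\item the next slice is partially completed, with $e$ extra cubes placed on top of its columns;
\item the remaining slices are exactly as in $\mathrm{m}^k$.
\end{itemize}
From this description I will count the minimal generators corner by corner: the corners of the staircase in each $x$-slice, plus the corners where consecutive slices differ. This should give $\binom{k+3}{2}-b$. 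I will count the top cubes of columns to get $\binom{k+2}{2}-b+1$. I need to check that adding a cube on top of a column in the partial slice trades one generator for another and one hotspot for another, so that neither count depends on $e$. This is consistent with the definition of $V(l)$, where $e$ enters only in the last term.

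\textbf{Step 2: extra components beyond Lemma \ref{lem: Hotspot}.} Every component from Lemma \ref{lem: Hotspot} has size one and arises from a vector going from a generator to a hotspot cube. Any component of size at least $2$, or any size-one component whose vector is not of this form, is therefore new. I would look for two families.
\begin{itemize}
\item \textbf{(a)} Vectors $\alpha$ with $\alpha_1=-1$ that compare a completed slice with the next one. Because the completed slice $i$ is one step taller than slice $i+1$ along its whole diagonal staircase, translating by such $\alpha$ produces a bounded component of vertical extent $2$. I expect one such vector for each cube at vertical distance $2$ in the completed region. Counting these cubes via the triangular staircase should give $\binom{k+3-b}{2}$.
\item \textbf{(b)} For each of the $e$ added cubes in the partial slice, three vectors: roughly, vectors of shape $(0,1,-1)$, $(0,-1,1)$ and $(\pm1,0,0)$ based at the new generator above the added cube. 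Each should produce a bounded component containing the added cube, or the cube adjacent to it, in a position that does not occur for $e-1$. I would check boundedness of each using Borel-fixedness together with the explicit staircase, as in the proof of Lemma \ref{lem: Hotspot}.
\end{itemize}

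\textbf{Main obstacle: distinctness.} The hard part is showing that families (a) and (b) are pairwise distinct and disjoint from the $G\cdot\#\FC$ unit components. Where the vector is shared with Lemma \ref{lem: Hotspot}, I need to show the new component is a different component of the same $(\tilde I+\alpha)\setminus\tilde I$. My plan is to separate them by an invariant: component size, or the position of the component's top cube relative to $\FC(\FJ_l)$. If a component of family (b) collides with a hotspot component, I will instead replace it with the vector from the adjacent corner, and check the small cases $e=0,1$ and $b=2$ by hand or by Macaulay2.
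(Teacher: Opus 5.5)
Your Step 1 is fine in outline. Step 2 has a genuine gap, though: the vectors you propose do not produce the extra $\binom{k+3-b}{2}+3e$ components.

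\emph{Family (a) cannot work with $\alpha_1=-1$.}
\begin{itemize}
\item Take a generator $g=(i+1,y,z)$ of a completed slice $i+1$, so $y+z=k+1-i$. Take a non-top cube $q=(i,y',z')$ of the completed slice $i$, and set $\alpha=q-g$ and $d=-(\alpha_2+\alpha_3)\geq 1$.
\item A staircase path in slice $i$, staying in the band $k+1-i-d\le u+v\le k+1-i$, joins $q$ inside $(\tilde I+\alpha)\setminus\tilde I$ to $(i,k+1-i,0)$ if $\alpha_3<0$, or to $(i,0,k+1-i)$ if $\alpha_2<0$.
\item The neighbour $(i,k+1-i,-1)$, respectively $(i,-1,k+1-i)$, lies in $\tilde I+\alpha$, so the component is unbounded.
\end{itemize}
Hence two consecutive completed slices give only hotspot singletons at $\alpha_1=-1$. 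New components with $\alpha_1=-1$ can only come from generator--target pairs in slices $\geq k+1-b$. The number of such pairs depends on $b$ alone, while $\binom{k+3-b}{2}$ grows with $k$.

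Your count is also wrong. The completed region contains $\sum_{i=0}^{k+1-b}(k-i)$ cubes of vertical distance $2$, not $\binom{k+3-b}{2}$. Already for $k=2$, $b=3$, i.e.\ $\FJ_7=(x^2,xy,xz)+(y,z)^3$, the two such cubes $(0,0,1)$ and $(0,1,0)$ lie in one and the same component (of size $5$, for $\alpha=(-1,0,0)$). A direct enumeration gives $T(\FJ_7)=29=7\cdot 4+1$, so one new component per such cube is impossible.

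\emph{Family (b) fails as stated.}
\begin{itemize}
\item $(1,0,0)$ gives the empty set, since $\tilde I+e_1\subseteq\tilde I$.
\item By Borel-fixedness, $(0,1,-1)$ gives only an unbounded set in the plane $z=-1$.
\item $(0,-1,1)$ gives only the hotspot singleton at the top of column $e-1$ of the partial slice.
\item $(-1,0,0)$ gives at most two non-hotspot components, independently of $e$.
\end{itemize}
More basically, a single fixed vector here cannot supply a new component for each of the $e$ added cubes. The vectors must vary with the cube.

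The missing idea, which the paper uses, is to fix the \emph{source} generator and vary the \emph{target}. Distinct targets then give distinct vectors $\alpha$, so distinctness within a family is automatic. Distinctness from the hotspot components follows because the new components have size $>1$.
\begin{itemize}
\item For the $\binom{k+3-b}{2}$ term, take the single generator $g$ immediately in front of the rightmost cube of vertical distance $2$ in slice $k+1-b$. Send it to the $j+1$ rightmost cubes of vertical distance $2$ in slice $k+1-b-j$, for $0\le j\le k+1-b$. Thus $\alpha_1=-(j+1)$ runs over $-1,\dots,-(k+2-b)$, with $\alpha_2,\alpha_3\ge 0$, and these vectors reach back across several slices rather than comparing adjacent ones.
\item For the $3e$ term, fix three generators $g'$, $g''$ and $g'''=z^{m_3}$, giving $\alpha_1=0$, $\alpha_1=-1$ and $\alpha_3<0$ respectively. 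Send each to the $e$ cubes $F^t$ of vertical distance $2$ in the partial slice. These families are separated from each other and from the first by $\alpha_1$, the sign of $\alpha_3$, and the size of the component.
\end{itemize}
A Macaulay2 check of small cases cannot replace this, since the lemma is claimed for all $k$ and $b$.
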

            \begin{proof} First, we show that $G(\FJ_l)
     ={k+3\choose 2}-b$ and $\#\FC(\FJ_l)={k+2\choose 2}-b+1$. 

     \begin{itemize}
         \item $G(\FJ_l)
     ={k+3\choose 2}-b$: For the tetrahedron of size $k+1$, i.e., $\BN^3\setminus\widetilde{\mathrm{m}^{k+1}}$, the number of generators is ${k+3\choose 2}$. We now proceed by induction on $b$. For $b=2$, we remove the front cube from the tetrahedron $\BN^3\setminus\widetilde{\mathrm{m}^{k+1}}$, and the number of generators is reduced by $2$.  Suppose that the claim holds for $b=j$ and we want to deduce it for $b=j+1$.  By the induction hypothesis, for ${k+3\choose 3}-{j\choose 2} \leq l<{k+3\choose 3}-{j-1\choose 2}$, we have  $G(\FJ_l)
     ={k+3\choose 2}-j$. Now, to go from $j$ to $j+1$, we remove the rightmost cube in the bottom row of the slice $(\FJ_l)_{k+1-j}$, which reduces the number of generators by one. Note that while $b$ is fixed, the number of generators does not change by increasing $l$. So, the claim follows. 

     \item $\#\FC(\FJ_l)={k+2\choose 2}-b+1$:  Similarly, for the tetrahedron of size $k+1$, we have $\#\FC(\FJ_l)={k+2\choose 2}$.  Proceeding by induction on $b$, for the base case, $b=2$, we remove the front cube from the tetrahedron $\BN^3\setminus\widetilde{\mathrm{m}^{k+1}}$, and $\#\FC(\FJ_l)$ reduces by $1$. The rest of the proof is similar to the previous case.
     \end{itemize}

            Now, we have the following claims.

             \emph{Claim 1.} For each $2\leq b \leq k+2$, for $l = {k+3\choose 3}-{b\choose 2}$, we have
\begin{align}\label{eq:FormulaT2}
    T(\FJ_{l})\geq G(\FJ_{l})\cdot\#\FC(\FJ_{l})+\binom{k+3-b}{2}=V(l).
\end{align}
        
              \begin{proof}[Proof of Claim 1] From Corollary \ref{Cor: LowerBoundOnT}, we have $T(\FJ_l)\geq G(\FJ_l)\cdot \#\FC(\FJ_l)$. We want to introduce $\binom{k+3-b}{2}$ many bounded components of size larger than one (so they will be different from the components of size one corresponding to the hotspot).
              
                  If $b=k+2$, then ${k+3-b\choose 2}=0$; hence, we assume $b<k+2$. For $0\leq j \leq k+1-b$ and $1\leq s \leq j+1$, define $R^{s}_{k+1-b-j}$ to be the $s$th rightmost cube of vertical distance $2$ in $(\FJ_l)_{k+1-b-j}$. Let $g$ be the generator of $\FJ_l$ that is immediately in front of the cube $R^1_{k+1-b}$
                  (see Figure \ref{Fig: CanonicalConfigTangentChange2}, left). The number of the cubes $R^{s}_{k+1-b-j}$ is $\sum_{j=0}^{k+1-b} (j+1)=\sum_{j'=1}^{k+2-b} j'={k+3-b\choose 2}$.

                Then, for any fixed $b$, it suffices to show that if $\alpha$ is a vector from $g$ to $R^{s}_{k+1-b-j}$ for any $s,j$, then $\alpha$ produces a bounded component. We observe that these ${k+3-b\choose 2}$ components are distinct.
                For such $\alpha$, which is of the form $(\alpha_1,\alpha_2,\alpha_3)$, there is a bounded component of $(\tilde \FJ_l+\alpha)\setminus \tilde \FJ_l$ consisting of $2b-1$ cubes. The argument for boundedness is as follows: since   $\alpha_1$ is between $-x(g)$ and $-1$, 
and $\alpha_2,\alpha_3\geq0$, together with having $\alpha_2+\alpha_3=-\alpha_1-1$, and due to the fact that the union of the slices $(\FJ_l)_t$ for $t<x(g)$ is a piece of a tetrahedron, moving towards $\alpha$ induces a connected component. This component is bounded (1) from the left by the left plane of $\tilde \FJ_l+\alpha$; (2) from the below by the bottom plane of $\tilde \FJ_l+\alpha$; (3) from front by $\tilde\FJ_l$; (4) from the back by the triangular slice $(\FJ_l)_{k+1-b}+\alpha$ which lies immediately behind the corresponding triangular slice $(\FJ_l)_{k+1-b-j}$ (see Figure \ref{Fig: Bound}). The latter is  because $R^{s}_{k+1-b-j}$ is of vertical distance $2$, and after moving from $g$ to $R^{s}_{k+1-b-j}$, the top cubes of $\FJ_l$ and the top cubes of $\FJ_l+\alpha$ have the same $z$-coordinate in pairs; the pairs that share the same $y$-coordinate. 

Moreover, the produced component consists of $b-1$ columns of two cubes and one column of one cube (the rightmost cube); adding them up, we get $2b-1$ cubes.

                                              \begin{figure}[h]

  \subcaptionbox*{}[.85\linewidth]{
    \includegraphics[width=\linewidth]{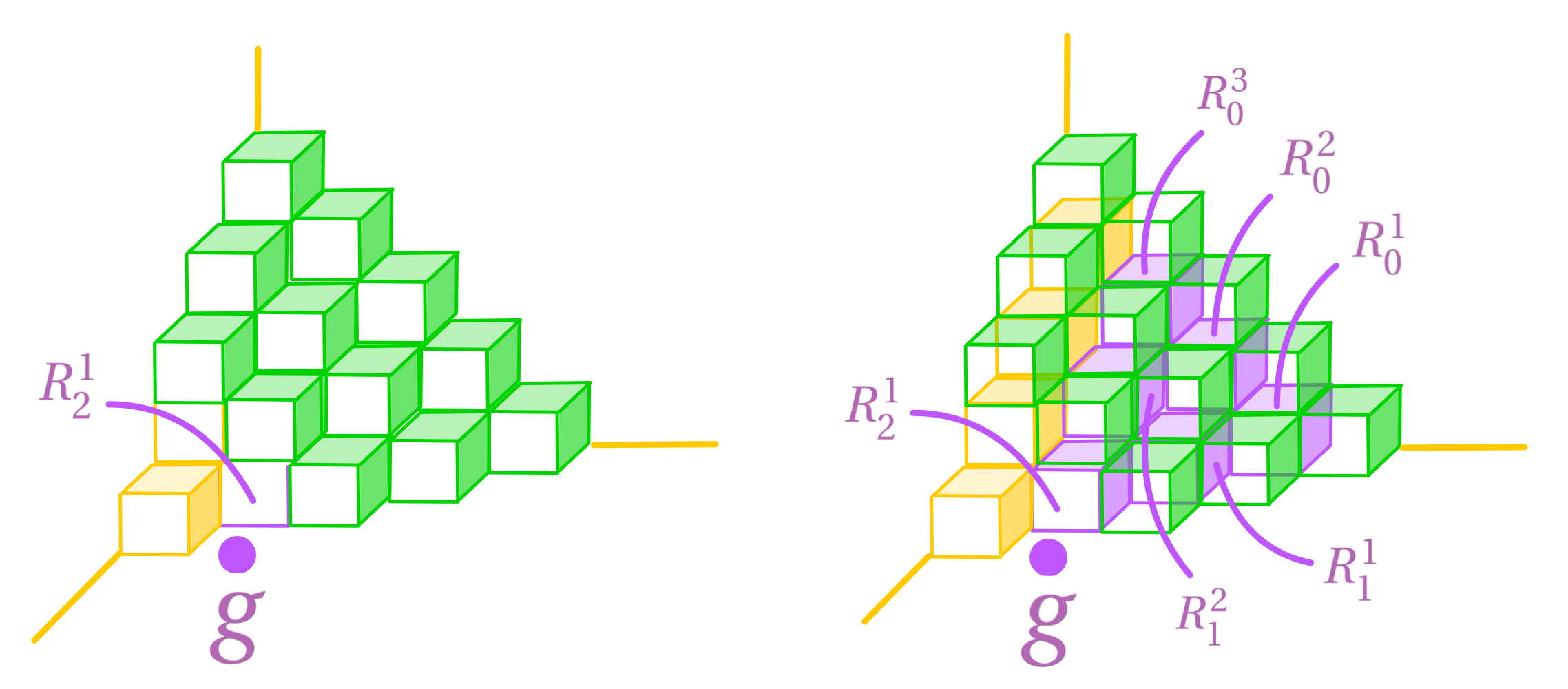}
  }
  \caption{The canonical configuration/ideal $\FJ_{32}$, together with the generator $g$ and the cubes $R^{s}_{k+1-b-j}$.}
  \label{Fig: CanonicalConfigTangentChange2}
\end{figure}

                                              \begin{figure}[h]

  \subcaptionbox*{}[.17\linewidth]{
    \includegraphics[width=\linewidth]{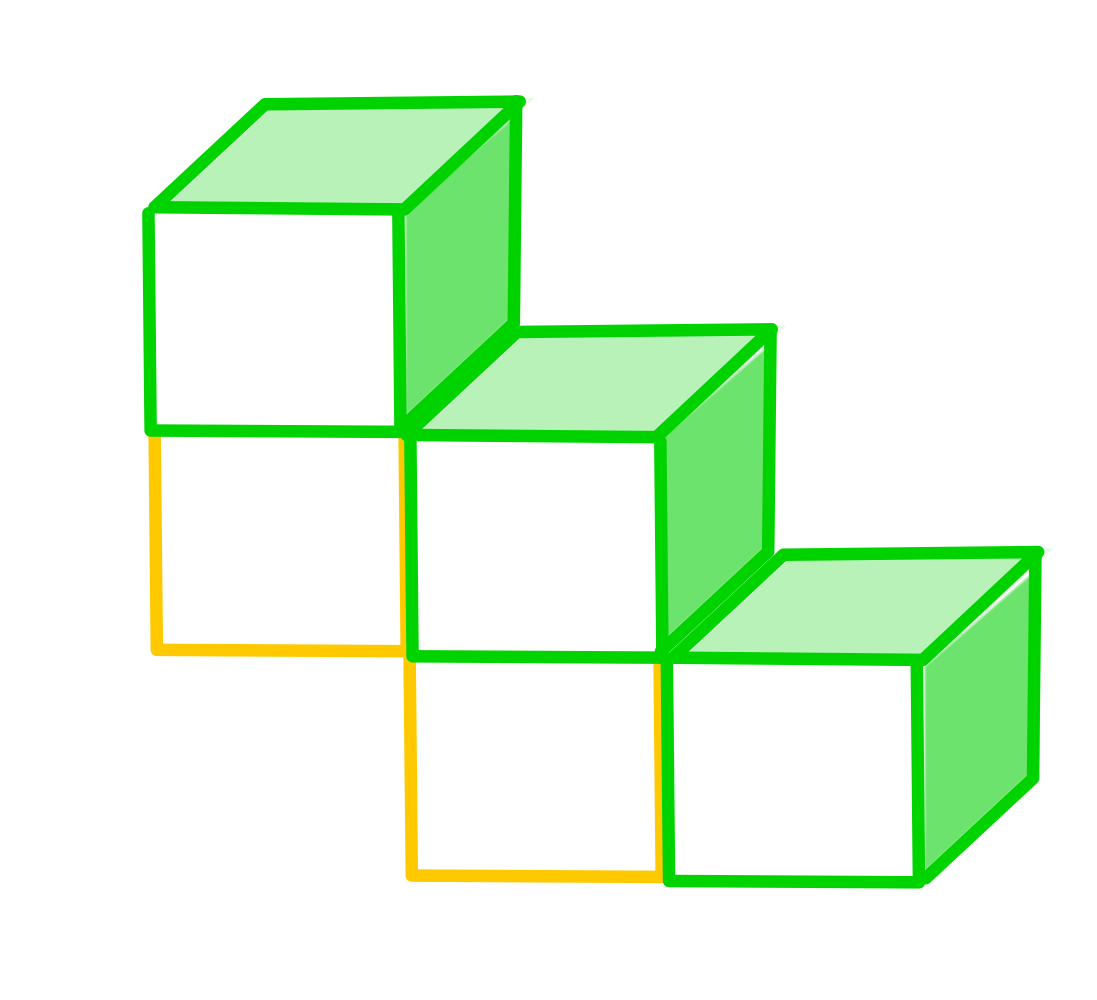}
  }
  \caption{The bounded component of size $2b-1$ in $(\tilde \FJ_l+\alpha)\setminus \tilde \FJ_l$.}
  \label{Fig: BoundedComponent}

\end{figure}

\begin{figure}[h]
  \subcaptionbox*{(1) $\BN^3\setminus \tilde \FJ_l$ and its copy to move along $\alpha$.}[.5\linewidth]{%
    \includegraphics[width=\linewidth]{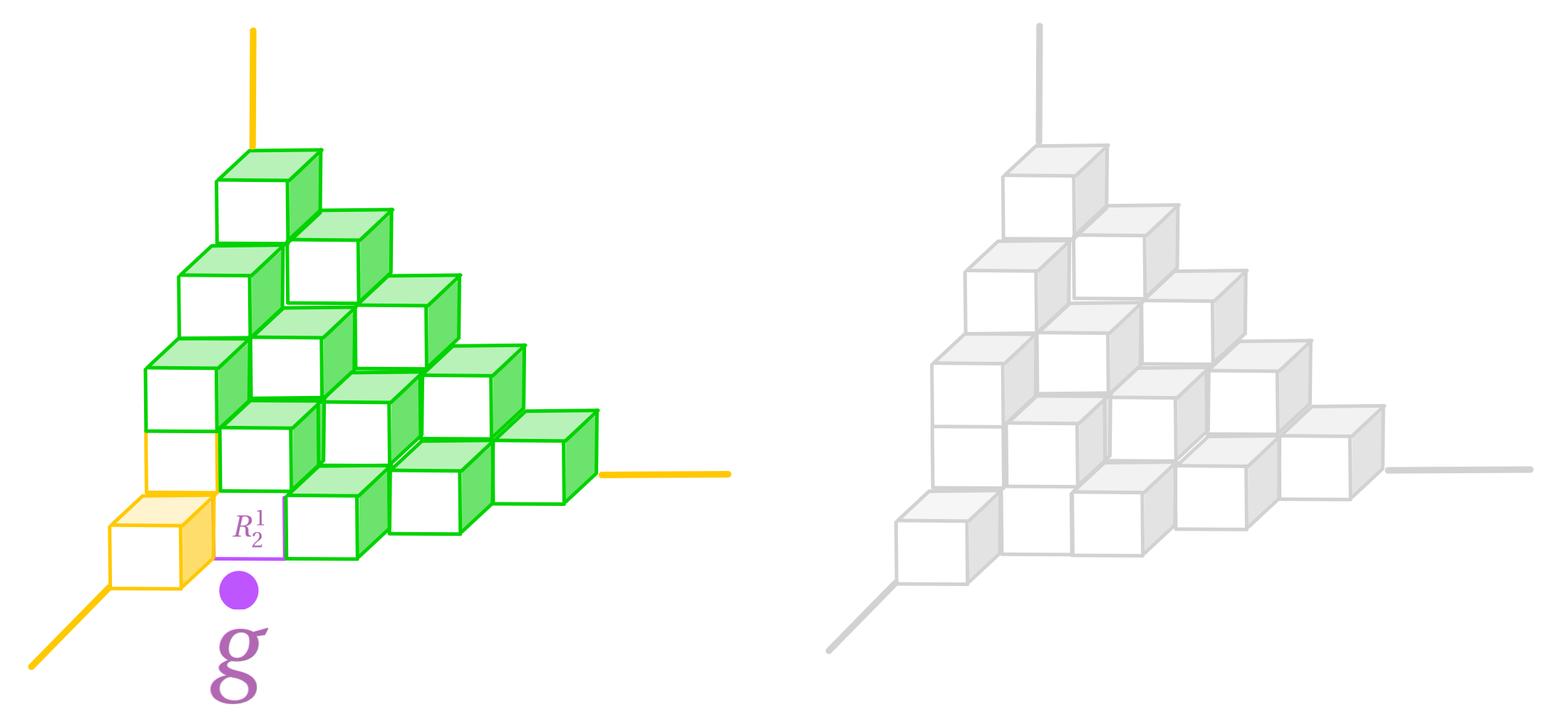}%
  }%
  \hskip7.0ex
  \subcaptionbox*{(2) local picture of $(\tilde \FJ_l+\alpha)\setminus \tilde \FJ_l$. The corresponding bounded component is highlighted.}[.39\linewidth]{%
    \includegraphics[width=\linewidth]{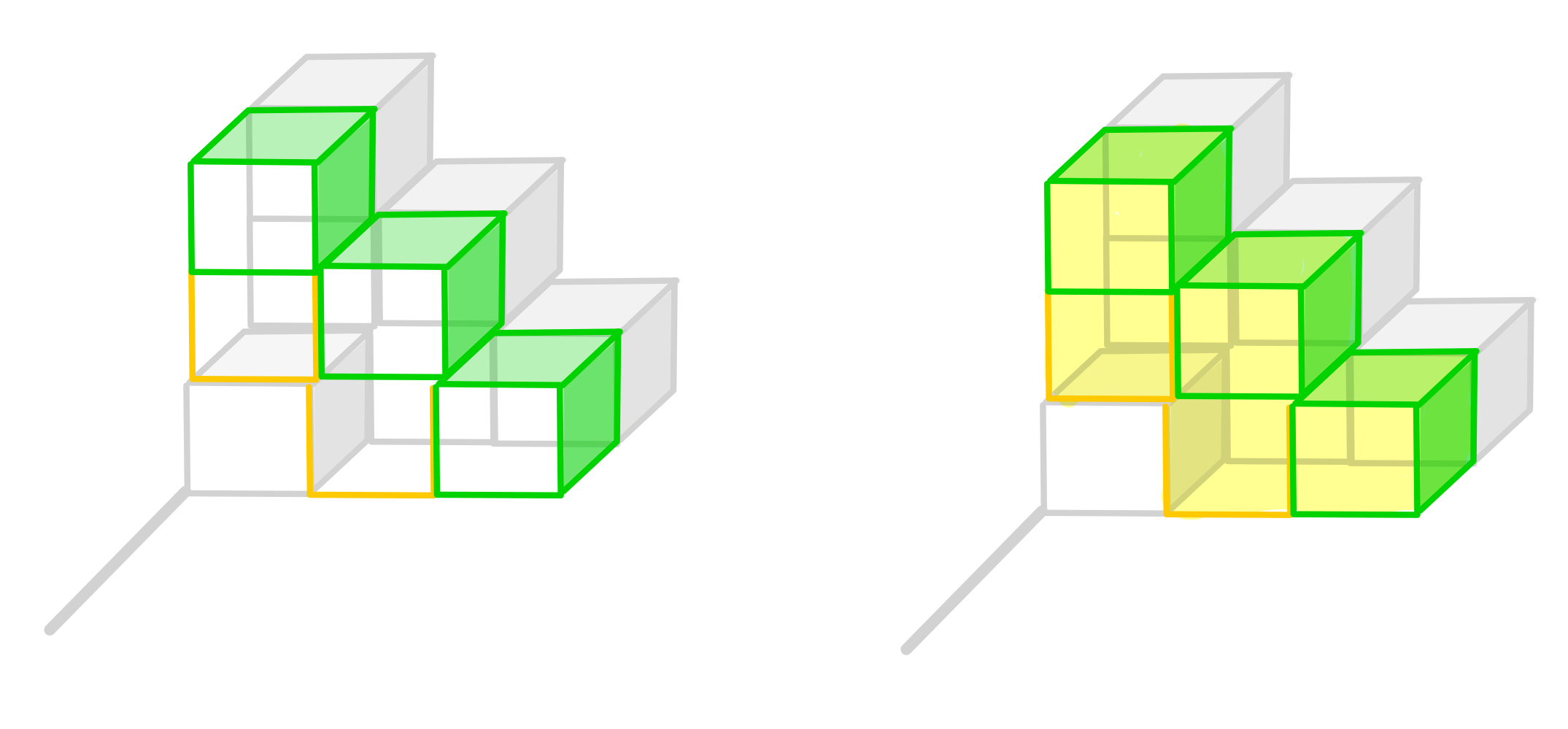}%
  }
  \caption{}
  \label{Fig: Bound}
\end{figure}
               \end{proof}   
              
                      \emph{Claim 2.} For a fixed degree ${k+3\choose 3}-{b\choose 2} < l<{k+3\choose 3}-{b-1\choose 2}$, where $2\leq b \leq k+2$, we have 
            \begin{align}
                T(\FJ_l)\geq G(\FJ_l)\cdot \#\FC(\FJ_l)+\binom{k+3-b}{2}+3\Big(l-{k+3\choose 3}+{b \choose 2}\Big)=V(l).
            \end{align}
            
            \begin{proof}[Proof of Claim 2]
                Let $\FJ_l=(x^{m_1},y^{m_2},z^{m_3}, \text{mixed monomial generators})$. From Corollary \ref{Cor: LowerBoundOnT}, we have $T(\FJ_l)\geq G(\FJ_l)\cdot \#\FC(\FJ_l)$. Let $g$ and $R_{k+1-b-j}^s$ be as in the proof of Claim 1 (in fact, $g$ is the rightmost generator of the only non-triangular slice). A similar argument as in the previous claim implies that there are $\binom{k+3-b}{2}$ additional bounded components given by the vectors from $g$ to $R_{k+1-b-j}^{s}$ (this is because the position of all $R^{s}_{k+1-b-j}$ is to the right of $g$, and this region is unchanged). Note that the only difference here is that the size of the produced bounded component might be smaller than $2b-1$, but it is still an odd number, strictly greater than $1$. This is due to the face that in this case, as in the proof of Claim 1, the component consists of a non-zero number of columns of size $2$ plus a single cube to their right in $\BN^3\setminus \tilde \FJ_l$. We want to construct further $3\Big(l-{k+3\choose 3}+{b \choose 2}\Big)$ many bounded components as follows. For each $1\leq t \leq l-{k+3\choose 3}+{b \choose 2}$, let us denote by $F^t$ the $(l-{k+3\choose 3}+{b \choose 2}+1-t)$-th leftmost cube  of vertical distance $2$ in the same slice as of $g$ in $\BN^3\setminus\tilde\FJ_l$.
            We want to show that, associated to each of the three specific choices of $\alpha$ and for each $F^t$, there is a bounded component of $(\tilde \FJ_l+\alpha)\setminus\tilde\FJ_l$ containing $F^t$. Let $g'$ be the generator of $\FJ_l$ immediately to the right of $F^1$ (note that this may or may not coincide with $g$). Also, let $g''$ be the generator of $\FJ_l$ in front of the only cube in the $xz$-plane of vertical distance $2$. Moreover, let $g'''=z^{m_3}$ be the top generator of $\FJ_l$. We then construct bounded components along vectors from $g',g''$, and $g'''$ to each $F^t$, and since there are $l-{k+3\choose 3}+{b \choose 2}$ many $F^t$'s, this will yield the desired result. For each $1\leq t\leq l-{k+3\choose 3}+{b \choose 2}$, we have the following cases. In the following argument, Figures \ref{Fig: Ex4}, \ref{Fig: Ex5}, and \ref{Fig: Ex6},
            will be helpful.

            \begin{itemize}
                \item Components associated to $\alpha=(\alpha_1,\alpha_2,\alpha_3)$ from $g'$ to $F^t$: The bounded component produced by this $\alpha$ is of size at least two (as it  contains $F^t$ and the hotspot cube above it). 
               Now we proceed with the proof of boundedness of the produced component. In this case, we have $\alpha=(0,\alpha_2,\alpha_3)$, for $\alpha_2<0$ and $\alpha_3\geq0$. The component is contained in the slice $(\FJ_l)_{k+2-b}$, and is (1) bounded from the left by the two cubes in $(\BN^3\setminus\tilde \FJ_l)+\alpha$ which are the shifted copies of $F^1$ and the cube above it; (2) bounded from below by $(\BN^3\setminus\tilde\FJ_l)+\alpha$ and the bottom plane of $\tilde \FJ_l +\alpha$  (the boundedness is due to $\alpha_3$ being non-negative); (3) bounded from the back  by the shifted slice $(\FJ_l)_{k+1-b}+\alpha$ for $b<k+2$, and by the back plane of $\tilde\FJ_l+\alpha$ for $b=k+2$. See Figure \ref{Fig: Ex6}, left.

               Since here $\alpha_1=0$, the produced bounded component is contained in the same slice as $g'$, i.e., the slice $(\FJ_l)_{k+2-b}$. We note that since the previously constructed bounded components from $g$ are contained in the previous $x$-slices, the components cannot coincide.  Also, since the size of this component is at least two, it cannot be the same as the components corresponding to the hotspot. 

                 \item  Components associated to $\alpha=(\alpha_1,\alpha_2,\alpha_3)$ from $g''$ to $F^t$: The bounded component produced by this $\alpha$ has size $2t$ (as it consists of the cubes $F^i$, for $1\leq i \leq t$,  and the hotspot cubes above them). Thus, due to parity, and the fact that  $\alpha_1=-1$ in this case, it will be different from the already constructed components. 
                 The component is contained in the slice $(\FJ_l)_{k+2-b}$, and is (1) bounded from the back by the slice $(\FJ_l)_{k+2-b}+\alpha$; 
                 (2) bounded from below by $(\BN^3\setminus\tilde\FJ_l)+\alpha$ and the bottom plane of $\tilde \FJ_l +\alpha$;  (3) bounded from the left by the left plane of $\tilde \FJ_l +\alpha$. See Figure \ref{Fig: Ex6}, middle.

                  \item  Components associated to $\alpha=(\alpha_1,\alpha_2,\alpha_3)$ from $g'''$ to $F^t$: Similarly, the bounded component produced by this $\alpha$ has size $2t$. In this case, the vector $\alpha$ is equal to $(\alpha_1,\alpha_2,\alpha_3)$ for $\alpha_1\geq0$ and $\alpha_3<0$. So, combining these with parity, we see that the new component is different from the already constructed ones. Moreover, the component is contained in the slice $(\FJ_l)_{k+2-b}$, and is (1) bounded from the left by the left plane of $\tilde \FJ_l +\alpha$; (2) bounded from below by  $(\FJ_l)_0 +\alpha$; (3) bounded from the back  by the back plane of $\tilde \FJ_l +\alpha$. See Figure \ref{Fig: Ex6}, right.
            \end{itemize}

       \begin{figure}[h]

  \subcaptionbox*{}[.99\linewidth]{
    \includegraphics[width=\linewidth]{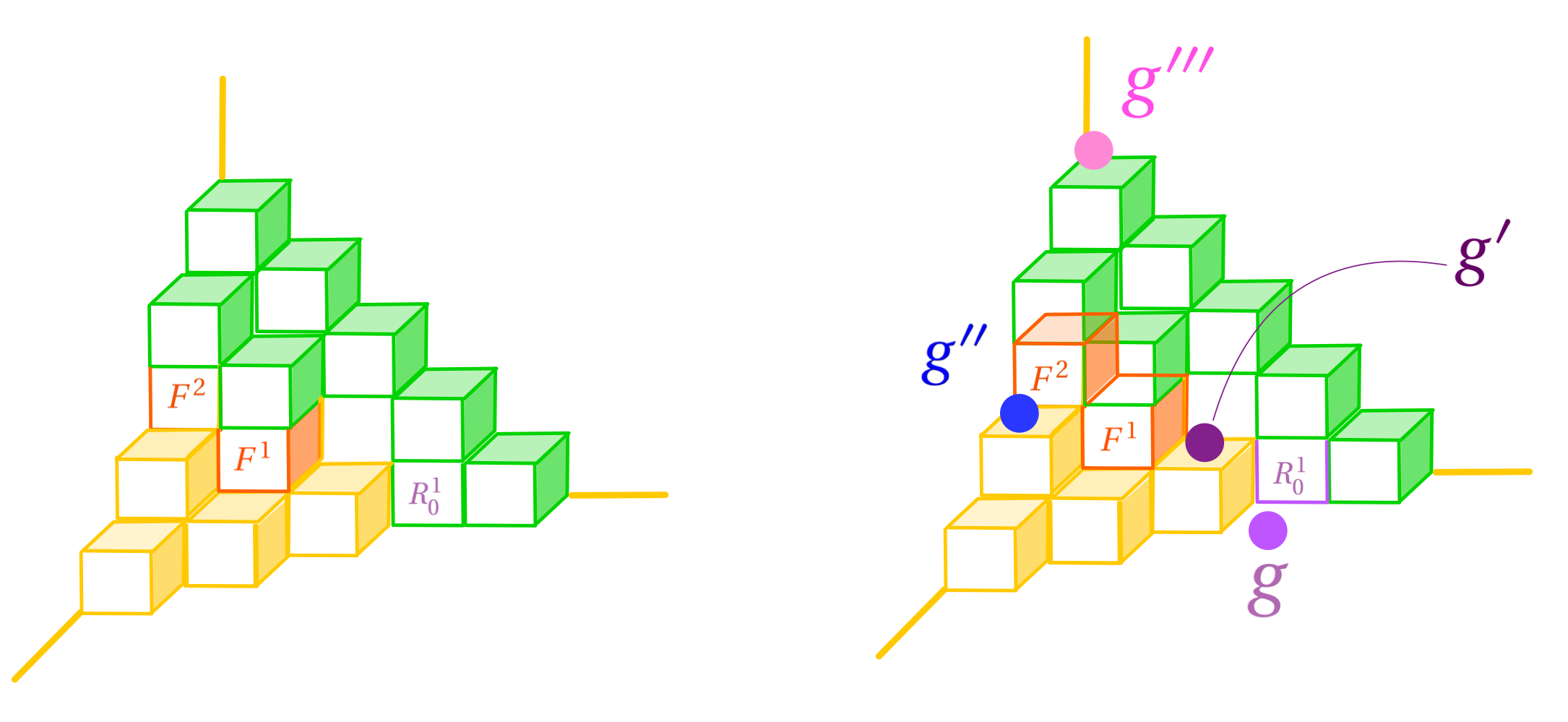}
  }
  \caption{$\BN^3\setminus\tilde \FJ_l$ and the corresponding generators.}
  \label{Fig: Ex4}

\end{figure}
          
      \begin{figure}[h]

  \subcaptionbox*{}[.99\linewidth]{
    \includegraphics[width=\linewidth]{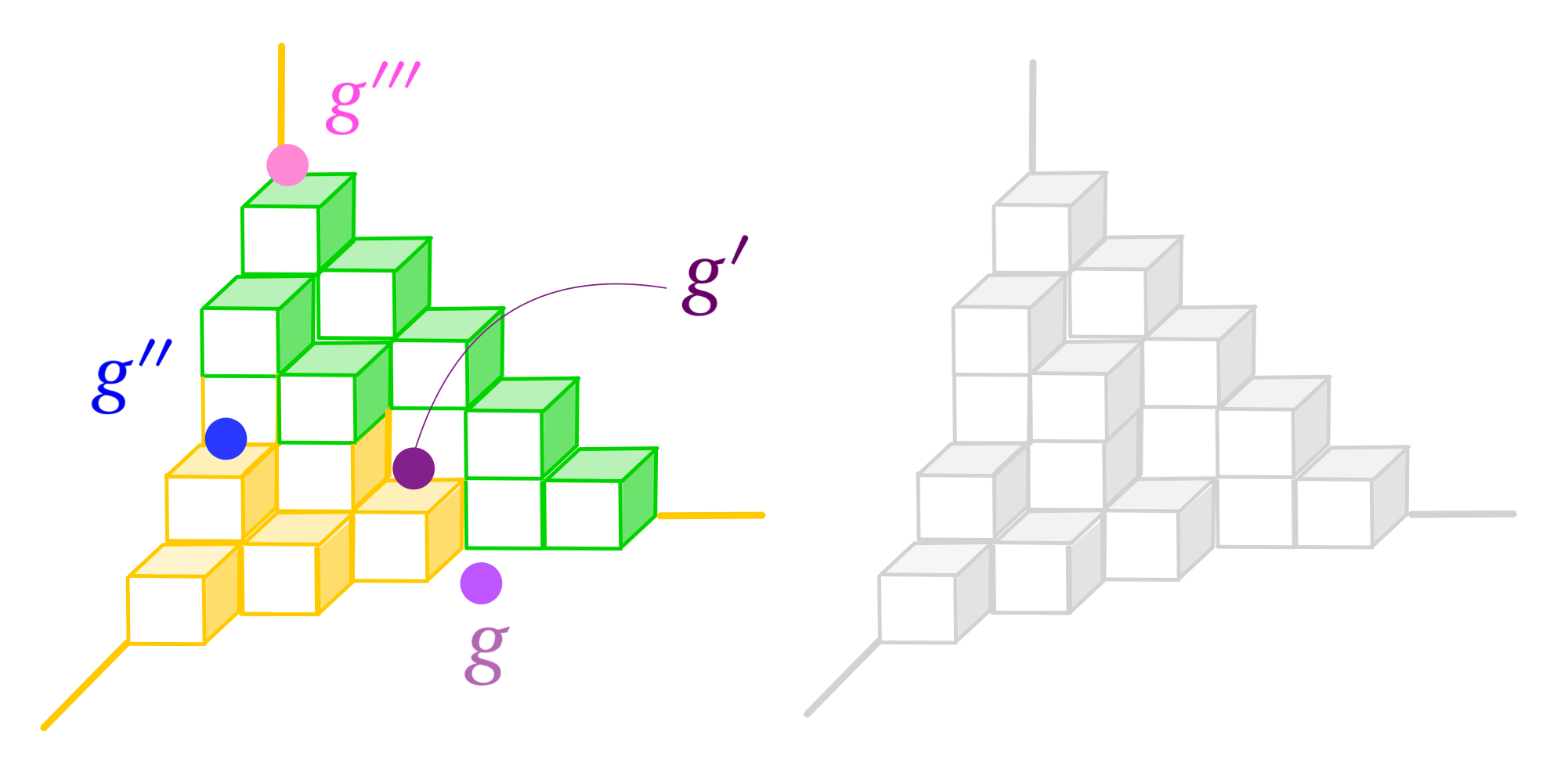}
}
  \caption{$\BN^3\setminus\tilde \FJ_l$ and a copy of it to move along $\alpha$.}
  \label{Fig: Ex5}

\end{figure}

      \begin{figure}[h]

  \subcaptionbox*{}[.99\linewidth]{
    \includegraphics[width=\linewidth]{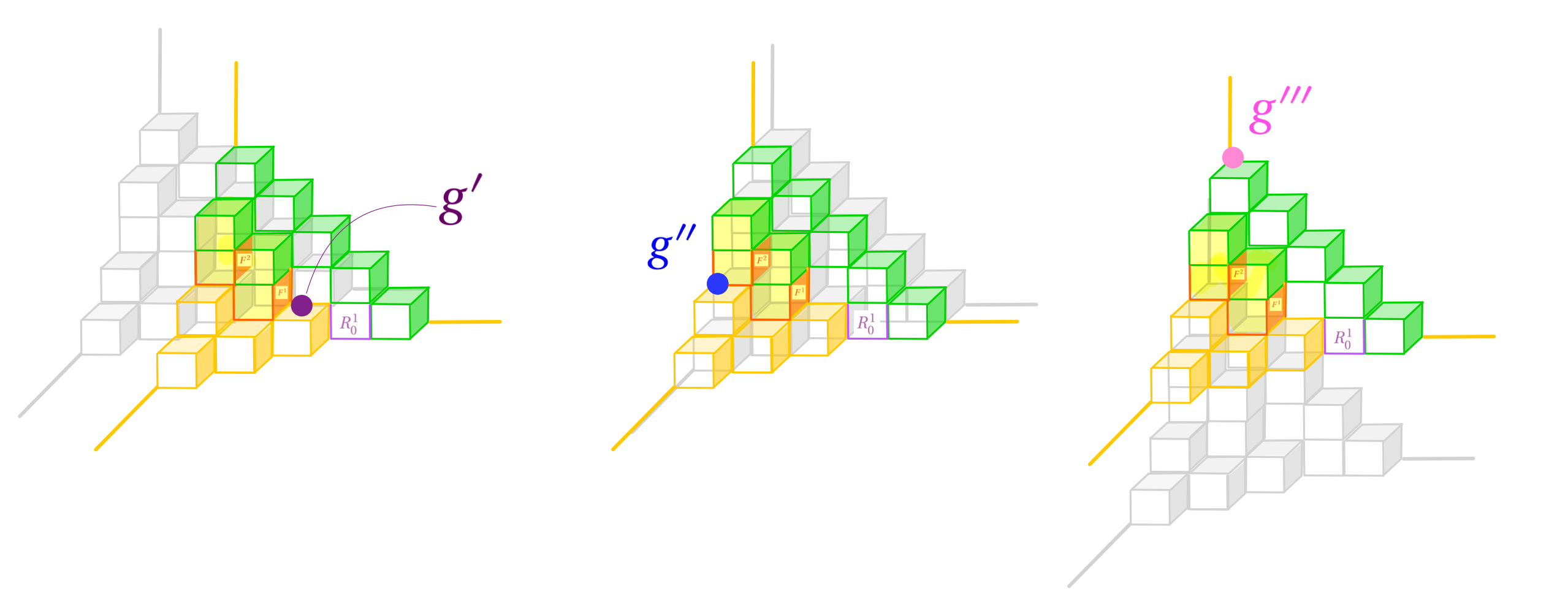}
  }
  \caption{$(\tilde \FJ_l+\alpha)\setminus \tilde \FJ_l$, where the source of $\alpha$ is either $g'$, $g''$ or $g'''$, and the target is $F^t$. The corresponding bounded component is highlighted.}
  \label{Fig: Ex6}

  \end{figure}


\end{proof}

Now, from Claim 1 and Claim 2, we obtain the desired inequality $T(\FJ_l)\geq V(l)$.
            \end{proof}     
            \begin{remark}
 Indeed, we have equality in Lemma \ref{lem: VLessT}, i.e., $T(\FJ_l)=V(l)$, for any $l$. However, since only the inequality is sufficient for the purpose of our main result, we skip the proof of equality.
            \end{remark}

  \section{A weak upper bound}
  In this section, as an intermediate step in our argument, we define an upper bound on $U_{m_1}(I)$, which will also be a lower bound on $V(l)$. 
  
  For an integer $l$ of types \eqref{Type 1}-\eqref{Type 3}, and  for a positive integer $m_1$, we define 
   \begin{align}\label{eq: WeakUpperBound}
       W_{m_1}(l):=(2m_1+1)l-2{m_1+2\choose 4}.
   \end{align}

   For any zero-dimensional Borel-fixed ideal $I$ of degree $l$ of types \eqref{Type 1}-\eqref{Type 3} with $m_1(I)=m_1$, we first show that $W_{m_1}(l)$ is an upper bound on $U_{m_1}(I)$.

   \begin{prop} \label{prop: ULessW}
      For any zero-dimensional Borel-fixed ideal $I$ of degree $l$ of  types \eqref{Type 1}-\eqref{Type 3} with $m_1(I)=m_1$, we have $ U_{m_1}(I)\leq W_{m_1}(l)$. 
   \end{prop}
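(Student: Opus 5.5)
The plan is to show the purely combinatorial inequality
\begin{align*}
\sum_{0\leq j\leq i\leq m_1-1} H_{i,j}(I)+\sum_{0\leq j\leq i\leq m_1-1} S_{i,j}(I)\;\geq\; 2{m_1+2\choose 4}.
\end{align*}
Since $l(I)=l$, the definitions of $U_{m_1}(I)$ and $W_{m_1}(l)$ show that this is exactly $U_{m_1}(I)\leq W_{m_1}(l)$. I expect the degree type \eqref{Type 1}--\eqref{Type 3} to play no role here; only Borel-fixedness and $m_1(I)=m_1$ will be used. I will prove the inequality term by term. For every pair $j<i$ with $d=i-j$, I aim to show
\begin{align*}
H_{i,j}(I)\geq {d+1\choose 2}\quad\text{and}\quad S_{i,j}(I)\geq {d+1\choose 2}.
\end{align*}

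The key input is the complement form of Borel-fixedness. If $x^iy^az^c\notin I$ with $i\geq 1$, then $x^{i-1}y^{a+1}z^c\notin I$ and $x^{i-1}y^az^{c+1}\notin I$. Indeed, if say $x^{i-1}y^az^{c+1}$ were in $I$, then strong stability (replacing $z$ by $x$) would put $x^iy^az^c$ in $I$. Iterating $d$ times, a cube at position $(a,c)$ of the slice $I_i$ forces every cube $(a+p,c+q)$ with $p,q\geq 0$ and $p+q\leq d$ to lie in the slice $I_j$, where $j=i-d$.

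For $H_{i,j}$, I apply this to the top cube of the $0$th column of $I_i$, at position $(0,h(i,0)-1)$. This cube is the highest cube of $I_i$, because column heights are non-increasing in $y$ by Borel-fixedness. It produces in $I_j$ the cubes $(p,h(i,0)-1+q)$ with $q\geq 1$ and $p+q\leq d$, all strictly above the highest cube of $I_i$. Their number is $\sum_{q=1}^{d}(d-q+1)={d+1\choose 2}$. Symmetrically, for $S_{i,j}$ I apply it to the bottom cube of the rightmost column of $I_i$, at position $(m_2(I_i)-1,0)$. This gives ${d+1\choose 2}$ cubes of $I_j$ strictly to the right of $I_i$. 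Here I need $I_i\neq\varnothing$ for $i\leq m_1-1$, which holds since $x^i\notin I$ for $i<m_1=m_1(I)$.

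It remains to sum these bounds. The number of pairs $0\leq j<i\leq m_1-1$ with $i-j=d$ is $m_1-d$, so I need the identity
\begin{align*}
\sum_{d=1}^{m_1-1}(m_1-d){d+1\choose 2}={m_1+2\choose 4},
\end{align*}
which follows from the hockey-stick identity applied twice; it checks for $m_1=2,3$, giving $1$ and $5$. The diagonal terms $i=j$ contribute zero. Adding the $H$ and $S$ contributions then gives $2{m_1+2\choose 4}$.

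The only step needing care is the precise reading of ``strictly above the highest cube'' and ``strictly to the right of the rightmost cubes'' in the definitions of $H_{i,j}$ and $S_{i,j}$. I must make sure the ${d+1\choose 2}$ cubes produced are genuinely in that region and are counted once. Using $z$-coordinate $\geq h(i,0)$ and $y$-coordinate $\geq m_2(I_i)$ as the regions settles this, since the produced cubes are distinct lattice points.
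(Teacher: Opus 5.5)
Your proposal is correct and is essentially the paper's proof. The paper also bounds each term separately, $H_{i,j}(I),S_{i,j}(I)\geq\binom{i-j+1}{2}$, using only Borel-fixedness. It phrases this through the column-height inequalities $h(i,0)\leq h(i-1,1)\leq\cdots\leq h(j,i-j)$ and $h(j,r)\geq h(j,i-j)+i-j-r$, which give exactly your staircase of cubes, and then sums with the same identity $2\sum_{r=1}^{m_1}(m_1+1-r)\binom{r}{2}=2\binom{m_1+2}{4}$.

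One small point to make explicit: iterating your two implications only produces the cubes with $p+q=d$. The cubes with $p+q<d$ then follow because $\BN^3\setminus\tilde I$ is closed under decreasing coordinates, since $I$ is an ideal.
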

   \begin{proof} First, we have the following claim.

    \emph{Claim.} For all $0\leq j\leq i\leq m_1 - 1$, we have $H_{i, j}(I)\geq{i - j + 1\choose2}$ and $S_{i, j}(I)\geq{i - j + 1\choose2}$.
       \begin{proof}[Proof of Claim]
        Recall that $h(i,j)$ is the height of the $j$th column of $I_i$. By Borel-fixedness, we have $h(i, 0)\leq h(i - 1, 1)\leq h(i - 2, 2)\leq\dots\leq h(j, i - j)$, and also for all $r$ with $0\leq r<i - j$, we have $h(j, r)\geq h(j, i - j) + i - j - r$. Therefore,
            \begin{align*}
                H_{i, j}(I)\geq\sum_{r = 0}^{i - j - 1} \big(h(j, r) - h(i, 0)\big)\geq\sum_{r = 0}^{i - j - 1} \big(i - j - r\big) = {i - j + 1\choose 2}.
            \end{align*}
            An identical proof works for $S_{i, j}(I)$.
       \end{proof}

       Using the inequalities obtained in the claim, we have
       \begin{align*}
           U_{m_1}(I) =& (2m_1 + 1)l - \sum_{0\leq j\leq i\leq m_1-1}H_{i, j}(I) - \sum_{0\leq j\leq i\leq m_1-1} S_{i, j}(I)\\\leq& (2m_1 + 1)l - 2\sum_{0\leq j\leq i\leq m_1-1}{i - j + 1\choose2} = (2m_1 + 1)l - 2\sum_{r = 1}^{m_1}(m_1 + 1 - r){r\choose2} = W_{m_1}(l).
       \end{align*}
   \end{proof}

   Next, we show that $W_{m_1}(l)$ increases in $m_1$.

   \begin{prop}\label{prop:WIncreasing} For all $\text{\:}{k+2\choose 3}\leq l<{k+3\choose 3}$ and for $\text{\:}1\leq m_1<k$, we have that $W_{m_1}(l)$ increases in $m_1$.
   \end{prop}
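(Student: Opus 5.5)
We show that $W_{m_1+1}(l)-W_{m_1}(l)>0$ for every $1\leq m_1$ with $m_1+1\leq k-1$. It then follows that $W_{m_1}(l)$ is strictly increasing on $1\leq m_1<k$.

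From the definition \eqref{eq: WeakUpperBound}, the difference is
\begin{align*}
W_{m_1+1}(l)-W_{m_1}(l)=2l-2\left({m_1+3\choose 4}-{m_1+2\choose 4}\right)=2l-2{m_1+2\choose 3},
\end{align*}
using Pascal's rule ${m_1+3\choose 4}={m_1+2\choose 4}+{m_1+2\choose 3}$. So it suffices to prove $l>{m_1+2\choose 3}$.

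For this we use the hypotheses $l\geq{k+2\choose 3}$ and $m_1+1\leq k-1$, i.e.\ $m_1\leq k-2$. The binomial ${n\choose 3}$ is nondecreasing in $n$, and ${n+1\choose 3}>{n\choose 3}$ once $n\geq 2$. Since $m_1+2\leq k<k+2$, we get
\begin{align*}
{m_1+2\choose 3}\leq{k\choose 3}<{k+2\choose 3}\leq l .
\end{align*}
The strict inequality ${k\choose 3}<{k+2\choose 3}$ holds because $k\geq 2$ in the relevant range; if $k\leq 2$ the range $1\leq m_1<m_1+1<k$ is empty and there is nothing to prove. Hence the difference is strictly positive.

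There is no real obstacle here. The only points needing care are the telescoping via Pascal's identity and the indexing convention. We read ``increases in $m_1$'' as comparing consecutive values $m_1$ and $m_1+1$, both lying in $[1,k-1]$. In fact the argument gives the stronger statement that $W_{m_1}(l)$ is strictly increasing for all $m_1\leq k$, since it only uses $m_1+2\leq k+1$, which still gives ${m_1+2\choose 3}<{k+2\choose 3}$. We also note that the upper bound $l<{k+3\choose 3}$ and the type restrictions \eqref{Type 1}--\eqref{Type 3} are not needed for this step.
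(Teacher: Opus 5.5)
Your proof is correct and is essentially the paper's argument: the paper also uses Pascal's rule to write $W_{m_1+1}(l)-W_{m_1}(l)=2\bigl(l-\binom{m_1+2}{3}\bigr)$, and then concludes from $l\geq\binom{k+2}{3}$. The only difference is that the paper is content with $\geq 0$, which is all that is needed to reduce Proposition \ref{Prop: WLessV} to the case $m_1=k-1$, whereas you also establish the strict inequality.
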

   \begin{proof}
       For $1\leq m_1 \leq k - 1$, noting that $l\geq{k+2\choose 3}$, we have
       \begin{align*}
           W_{m_1 + 1}(l) - W_{m_1}(l) = 2l - 2\big({m_1 + 3\choose4} - {m_1 + 2\choose4}\big) = 2\big(l - {m_1 + 2\choose3}\big)\geq0.
       \end{align*}
   \end{proof}

   Finally, we show that $W_{m_1}(l)$ is a lower bound on $V(l)$.
   
       \begin{prop} \label{Prop: WLessV}
      For all $\text{\:}{k+2\choose 3}\leq l<{k+3\choose 3}$ of types \eqref{Type 1}-\eqref{Type 3} and for $\text{\:}1\leq m_1<k$, we have $W_{m_1}(l)<V(l)$. 
   \end{prop}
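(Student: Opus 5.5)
The plan is to reduce to the single worst value of $m_1$ and then verify a polynomial inequality separately for each degree type. By Proposition \ref{prop:WIncreasing}, for fixed $l$ with ${k+2\choose 3}\leq l<{k+3\choose 3}$, the quantity $W_{m_1}(l)$ is nondecreasing in $m_1$ on $1\leq m_1\leq k-1$. It therefore suffices to prove $W_{k-1}(l)<V(l)$, where
\begin{align*}
W_{k-1}(l)=(2k-1)l-2{k+1\choose 4}.
\end{align*}
We may assume $k\geq 2$, since otherwise there is no $m_1$ with $1\leq m_1<k$. For each type we then locate $l$ in the range ${k+3\choose 3}-{b\choose 2}\leq l<{k+3\choose 3}-{b-1\choose 2}$. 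We write $l={k+3\choose 3}-{b\choose 2}+t$ with $0\leq t<b-1$, substitute into the formula for $V(l)$ from Section \ref{Section: CanonicalConfig}, and compare.

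\textbf{Type \eqref{Type 1}.} Since ${k+3\choose 3}-{k+2\choose 2}={k+2\choose 3}$, we have $b=k+2$ and $t=j$, which is valid as long as $j<k+1$. Because $(k+3)/2<k+1$ for $k\geq 2$, this covers the whole range. The values are $G(\FJ_l)={k+2\choose 2}$, $\#\FC(\FJ_l)={k+1\choose 2}$ and ${k+3-b\choose 2}=0$, so $V(l)={k+2\choose 2}{k+1\choose 2}+3j$. Factoring out $k(k+1)/12$ gives
\begin{align*}
{k+2\choose 2}{k+1\choose 2}-(2k-1){k+2\choose 3}+2{k+1\choose 4}=\tfrac{k(k+1)}{12}\big(3(k+2)(k+1)-2(2k-1)(k+2)+(k-1)(k-2)\big)=k(k+1).
\end{align*}
Hence $V(l)-W_{k-1}(l)=k(k+1)-(2k-4)j$. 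Using $j\leq (k+3)/2$, this is at least $k(k+1)-(k-2)(k+3)=6>0$. This computation also shows why the range of $j$ in \eqref{Type 1} is roughly what the method can reach.

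\textbf{Types \eqref{Type 2} and \eqref{Type 3}.} The procedure is the same. For \eqref{Type 3}, take $t=0$ and $b\in\{2,3,4,k+1,k+2\}$. For \eqref{Type 2} with $l={k+3\choose 3}-j$, $1\leq j\leq 4$, the data are: $j=1$ gives $b=2,t=0$; $j=2$ gives $b=3,t=1$; $j=3$ gives $b=3,t=0$; $j=4$ gives $b=4,t=2$. Each case makes $V(l)-W_{k-1}(l)$ an explicit polynomial in $k$. For the small fixed values $b=2,3,4$ the leading terms should come from $G\cdot\#\FC\approx {k+3\choose 2}{k+2\choose 2}$. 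We then check this against $(2k-1){k+3\choose 3}-2{k+1\choose 4}$, again by factoring out a common factor such as $k(k+1)/12$ or $(k+1)(k+2)/12$ so that a low-degree residual remains.

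\textbf{Main obstacle.} The step I expect to be hardest is confirming that these residual polynomials are strictly positive for every admissible $k$, not only asymptotically. In particular this needs checking for small $k$, and for $b=k+1$ where $l={k+2\choose 3}+k+1$, and care is needed wherever a degree lies near the boundary of an interval in $b$. If a residual fails to be positive for a few small $k$, the plan is to check those finitely many cases directly, by computing $W_{m_1}(l)$ and $V(l)$ numerically.
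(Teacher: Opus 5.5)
Your route is the paper's: reduce to $m_1=k-1$ via Proposition~\ref{prop:WIncreasing}, then compare $V(l)$ with $W_{k-1}(l)$ type by type. Your Type~\eqref{Type 1} computation is correct and gives the paper's residual $k(k+1)-(2k-4)j$. Your $(b,t)$ assignments for Type~\eqref{Type 2} are also right.

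For Types~\eqref{Type 2} and~\eqref{Type 3}, however, you only announce the computation and explicitly leave positivity of the residuals open. Those computations are the whole content of the proof in these cases, so as written they are not proved. They do close, with no small-$k$ checks needed. Write $l=\binom{k+3}{3}-\binom{b}{2}+t$. The quartic and cubic terms cancel exactly, since $\binom{k+3}{2}\binom{k+2}{2}-(2k-1)\binom{k+3}{3}+2\binom{k+1}{4}=(k+1)(3k+4)$, and one gets
\begin{align*}
V(l)-W_{k-1}(l)=(k+1)(3k+4)-(b-1)\binom{k+3}{2}-b\binom{k+2}{2}+(2k+1)\binom{b}{2}+\binom{k+3-b}{2}-(2k-4)t.
\end{align*}
The residuals for the needed cases are:
\begin{itemize}
\item $(b,t)=(2,0)$: $2k^2+4k$;
\item $(3,0)$: $k^2+3k-2$;
\item $(3,1)$: $k^2+k+2$;
\item $(4,0)$: $4k-2$;
\item $(4,2)$: $6$;
\item $(k+1,0)$: $2k+4$;
\item $(k+2,0)$: $k^2+k$.
\end{itemize}
All are positive for $k\geq 2$, and they match the paper's reductions up to positive factors (for instance $k^2+2k$ for $b=2$).

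Two points in your plan would have misled you. First, because of this exact cancellation, the comparison is not governed by the leading term $\binom{k+3}{2}\binom{k+2}{2}$ of $G(\FJ_l)\cdot\#\FC(\FJ_l)$. The sign is decided by a quadratic in $k$, whose $k^2$-coefficient for fixed $b$ is $4-b$. Second, this explains why Type~\eqref{Type 3} stops at $b=4$, apart from $b=k+1,k+2$. For $b=5$, $t=0$ the residual is $k(7-k)$, which is $\leq 0$ for every $k\geq 7$. So your fallback of checking finitely many small $k$ numerically could not rescue a residual with negative leading coefficient. For the listed types this situation never arises, so no fallback is needed.
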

   \begin{proof} For such $l$, we want to show
   \begin{align*}
       (2m_1+1)l-2{m_1+2\choose 4}<\Big({k+3\choose 2}-b\Big)\cdot\Big({k+2\choose 2}-b+1\Big)+\binom{k+3-b}{2}+3\Big(l-{k+3\choose 3}+{b \choose 2}\Big).
   \end{align*}
   By proposition \ref{prop:WIncreasing}, it suffices to show this for $m_1 = k -1$.

   \textbf{Case 1.}  $l={k+2\choose 3}+j$,  for $0\leq j\leq (k+3)/2$.

   In this case, $b = k + 2$, since $l \leq {k + 2\choose3} + \frac{k + 3}{2} < {k + 3\choose3} - {k + 1\choose2}$ for $k > 1$. Thus, we have to show
    \begin{align*}
       (2k - 1)l-2{k + 1\choose 4}<\Big({k+3\choose 2}-(k + 2)\Big)\cdot\Big({k+2\choose 2}-(k + 2)+1\Big)+3\Big(l-{k+3\choose 3}+{k + 2 \choose 2}\Big).
   \end{align*}
   Substituting $l = {k + 2\choose3} + j$, expanding and simplifying, this is equivalent to $0<j(-2k+4)+k^2+k$, which, for $k > 2$, is equivalent to
   \begin{align*}
       j<\frac{k^2+k}{2k - 4} = \frac{k + 3}{2} + \frac{3}{k - 2}
   \end{align*}
   which is true since $j\leq(k+3)/2$. For $k = 2$, the desired inequality is equivalent to $0 < 6$, and hence is true.

     \textbf{Case 2.}  $l={k+3\choose 3}-j$,  for $j=1,2,3,4$.
     
For $j = 1 = {2\choose2}$ and $j = 3 = {3\choose2}$, this is confirmed in case 3. For $j = 2$, we have $b = 3$ and therefore we have to show that
     \begin{align*}
       &(2k - 1)\big({k + 3\choose3} - 2\big)-2{k + 1\choose 4}\\<&\Big({k+3\choose 2}-3\Big)\cdot\Big({k+2\choose 2}-2\Big)+\binom{k}{2}+3\Big(\big({k + 3\choose3} - 2\big)-{k+3\choose 3}+3\Big),
   \end{align*}
   which is equivalent to $0 < k^2 + k + 2$, which holds for all positive $k$.

   For $j = 4$, we have $b = 4$ and the desired inequality is equivalent to $0 < 6$, and thus holds.

      \textbf{Case 3.}  $l={k+3\choose 3}-{b\choose 2}$ for $b=2,3,4,k+1,k+2$.

      In this case, we need to check
      
      \begin{align}\label{eq: ineq3}
       (2k-1)\big({k+3\choose 3}-{b\choose 2}\big)-2{k + 1\choose 4}<\Big({k+3\choose 2}-b\Big)\cdot\Big({k+2\choose 2}-b+1\Big)+\binom{k+3-b}{2}.
      \end{align}

For $b=2$, this can be reduced to 

 \begin{align*}
       (2k-1)\big({k+3\choose 3}-1\big)-2{k+1\choose 4}<\Big({k+3\choose 2}-2\Big)\cdot\Big({k+2\choose 2}-1\Big)+\binom{k+1}{2},
   \end{align*}
   which is equivalent to showing that $k^2+2k>0$. But this is always true for $k\geq 1$, so we have the desired inequality in this case.

         For $b=3,4,k+1,k+2$, \eqref{eq: ineq3} can similarly be reduced to $k^2+3k-2>0$, $k>0.5$, $k>-2$, and $k^2+k>0$, respectively, which are always true for $k\geq 1$ in all cases. Therefore, we have the desired result.

   \end{proof}

   \section{Proof of the necessary condition conjecture} We can now complete the proof of the main result.

   \begin{theorem}[Conjecture B for degrees of types \eqref{Type 1}-\eqref{Type 3}]\label{Cor: ConjB} If $I=(x^{m_1},y^{m_2},z^{m_3},\text{mixed terms})$ is a zero-dimensional Borel-fixed ideal in $\BC[x,y,z]$ of degree ${k+2 \choose 3}\leq l<{k+3 \choose 3}$ of  types \eqref{Type 1}-\eqref{Type 3} with maximal singularity in $\Hilb^l(\BA^3)$, then $m_1=k$.
       
   \end{theorem}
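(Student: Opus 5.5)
The argument is a chaining of the results already proved, followed by a short exclusion of the case $m_1>k$. Let $I$ be a zero-dimensional Borel-fixed ideal of degree $l$, with ${k+2\choose 3}\leq l<{k+3\choose 3}$ of types \eqref{Type 1}--\eqref{Type 3}, and suppose $[I]$ has maximal tangent space dimension in $\Hilb^l(\BA^3)$. We must rule out both $m_1<k$ and $m_1>k$.

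\textbf{Case $m_1<k$.} Assume $m_1=m_1(I)<k$ and aim for a contradiction via the chain
\begin{align*}
T(I)\leq U_{m_1}(I)\leq W_{m_1}(l)<V(l)\leq T(\FJ_l).
\end{align*}
The links are justified as follows.
\begin{itemize}
\item The first inequality is Lemma \ref{lem: TLessU}.
\item The second is Proposition \ref{prop: ULessW}; it applies because $I$ is Borel-fixed of degree $l$ of the required type.
\item The strict inequality is Proposition \ref{Prop: WLessV}, valid for every $1\leq m_1<k$. The reduction to $m_1=k-1$ inside that proof uses Proposition \ref{prop:WIncreasing}.
\item The last inequality is Lemma \ref{lem: VLessT}, applied to the canonical ideal $\FJ_l$ of Definition \ref{Def: canonicalConfig}.
\end{itemize}
Note that $\FJ_l$ is zero-dimensional, Borel-fixed, has degree $l$, and satisfies $m_1(\FJ_l)=k$. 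Hence $T(I)<T(\FJ_l)$, so $[I]$ is not a point of maximal singularity, a contradiction. The only subtlety is $m_1\geq 1$, which holds automatically since $I\neq R$.

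\textbf{Case $m_1>k$.} I expect this to be the main point needing care, since the chain above only handles $m_1<k$. The plan is a counting argument using Borel-fixedness. If $x^k\notin I$, then by Borel-fixedness every monomial of degree at most $k$ lies outside $I$. Indeed, if a monomial $x^ay^bz^c$ with $a+b+c\leq k$ were in $I$, repeatedly replacing $y$ or $z$ by $x$ (strong stability) and then multiplying by $x$ would give $x^k\in I$. Therefore $\BN^3\setminus\tilde I$ contains the whole tetrahedron of monomials of degree $\leq k$. This tetrahedron has ${k+3\choose 3}$ elements, so $l\geq{k+3\choose 3}$, contradicting $l<{k+3\choose 3}$.

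Similarly, in the first case the lower bound $l\geq{k+2\choose 3}$ is what Proposition \ref{prop:WIncreasing} needed, and it is part of the hypothesis. Combining the two cases gives $m_1=k$.
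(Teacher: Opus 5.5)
Your proof is correct and follows the paper's argument: you rule out $m_1<k$ by the same chain $T(I)\leq U_{m_1}(I)\leq W_{m_1}(l)<V(l)\leq T(\FJ_l)$, built from Lemma \ref{lem: TLessU}, Proposition \ref{prop: ULessW}, Proposition \ref{Prop: WLessV} and Lemma \ref{lem: VLessT}. Your separate treatment of $m_1>k$ covers a step the paper leaves implicit (its proof only addresses $m_1<k$), and your strong-stability count is sound: if $x^k\notin I$, then all ${k+3\choose 3}$ monomials of degree at most $k$ lie outside $I$, contradicting $l<{k+3\choose 3}$.
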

   \begin{proof}
This is an immediate conclusion of Lemma \ref{lem: TLessU}, Proposition \ref{prop: ULessW}, Proposition \ref{Prop: WLessV}, Lemma \ref{lem: VLessT}, and the fact that $m_1(\FJ_l)=k$. More precisely, if we assume $m_1(I)<k$, then $T(I)\leq U_{m_1}(I)\leq W_{m_1}(l)<V(l)\leq T(\FJ_l)$; i.e., we have found an ideal $\FJ_l$ of degree $l$ and $m_1(\FJ_l)=k$ such that $T(I) < T(\FJ_l)$.
   \end{proof}

The following corollary was first proved in \cite{MR1}, which  also follows as a special case of the main result in the present paper:
      \begin{corollary}[Brian\c{c}on-Iarrobino Conjecture in 3D] \label{Cor:BIConj3D} The Brian\c{c}on-Iarrobino Conjecture holds in three dimensions: $[\mathrm{m}^k]$ is the unique point of the maximum singularity in $\Hilb^{{k+2\choose 3}}(\BA^3)$.
   \end{corollary}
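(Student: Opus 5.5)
The plan is to specialise the main theorem to the tetrahedral degree and then pass from Borel-fixed points to arbitrary points. Take $l={k+2\choose 3}$. This is of type \eqref{Type 1} with $j=0$ (equivalently type \eqref{Type 3} with $b=k+2$), so Theorem \ref{Cor: ConjB} applies.

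\emph{Step 1: Borel-fixed points.} Suppose $I$ is a zero-dimensional Borel-fixed ideal of degree ${k+2\choose 3}$ at which $\Hilb^{l}(\BA^3)$ is maximally singular. By Theorem \ref{Cor: ConjB}, $m_1(I)=k$, so $x^k\in I$. Strong stability lets us replace any factor $x$ of a monomial in $I$ by $y$ or $z$. Applying this repeatedly to $x^k$ shows that every degree-$k$ monomial lies in $I$, so $\mathrm{m}^k\subseteq I$. Both ideals have colength ${k+2\choose 3}$, hence $I=\mathrm{m}^k$. For existence, we also record that the maximum is attained at $[\mathrm{m}^k]$; this follows from the next step.

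\emph{Step 2: reduction of arbitrary points to Borel-fixed ones.} Let $M$ be the maximal value of $T$ on $\Hilb^{l}(\BA^3)$. By upper semicontinuity of $T$, the locus $Z=\{T\geq M\}$ is closed. It is also invariant under the affine group of $\BA^3$, so in particular under translations and under the upper triangular Borel subgroup.
\begin{itemize}
\item Let $[J]\in Z$. Scaling $t\to 0$ moves the support of $J$ to the origin. The limit $[J_0]$ exists by properness of the punctual Hilbert–Chow fibre, and it lies in $Z$.
\item Degenerating $J_0$ further by the Borel group (Borel fixed point theorem, or a generic initial ideal) gives a Borel-fixed point of $Z$. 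By Step 1 this point is $[\mathrm{m}^k]$, which shows existence of the maximum at $[\mathrm{m}^k]$.
\item Now suppose $J$ is supported at a single point, which after translation we take to be the origin. The degeneration to the lowest-degree-form ideal and then to the generic initial ideal preserves the Hilbert function of $\mathrm{gr}(R/J)$.
\item The Hilbert function of $R/\mathrm{m}^k$ is $1,3,\dots,{k+1\choose 2},0$. Since the limit is $\mathrm{m}^k$, $\mathrm{gr}(R/J)$ has this Hilbert function. Hence $J$ contains no element of order $<k$, i.e.\ $J\subseteq \mathrm{m}^k$.
\item Equality of colengths then forces $J=\mathrm{m}^k$.
\end{itemize}
So every maximally singular point supported at one point is a translate of $[\mathrm{m}^k]$.

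\emph{Main obstacle.} The hard part is excluding maximally singular points with disconnected support. There $T$ is additive over the support, $T(J_1\sqcup J_2)=T(J_1)+T(J_2)$, yet such a $J$ can still degenerate to $\mathrm{m}^k$, so Step 2 alone does not rule it out. My first attempt would bound each summand by the maximum in smaller degree, $l_1+l_2=l$ with $l_i<l$. I would use Lemma \ref{lem: TLessU} together with Proposition \ref{prop: ULessW}, giving $T\leq W_{m_1}(l_i)$ for the relevant $m_1$, and compare the sum with $V(l)=T$-lower bound at $\mathrm{m}^k$ from Lemma \ref{lem: VLessT}. The aim is to show that the superlinear growth of $V(l)$ (of order $l^{4/3}$) beats the sum of the two bounds. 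If this estimate is too weak for small $k$, I would instead read ``unique point'' as unique among points supported at a single point, or equivalently unique up to translation, which is exactly what Steps 1 and 2 establish.
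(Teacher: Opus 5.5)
Your Step 1 runs the Borel moves in the wrong direction. In the paper's convention $x=x_1$ is the strongest variable, which is why $m_1\le m_2\le m_3$. Strong stability lets you replace $z$ by $y$ or $x$, and $y$ by $x$, but never $x$ by $y$ or $z$. For instance, $(x,y^2,yz,z^2)$ is Borel-fixed and contains $x$ but not $y$, so $x^k\in I$ does not give $\mathrm{m}^k\subseteq I$.

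The correct inclusion is the opposite one. Any monomial of degree $d$ in $I$ can be moved to $x^d\in I$, and $x^{k-1}\notin I$. Hence $I$ contains no monomial of degree $<k$, so $I\subseteq\mathrm{m}^k$, and equality of colengths gives $I=\mathrm{m}^k$. This is \cite[Lemma 1.7]{Rezaee-23-Conjectures}. With this fix, your Step 1 is exactly the paper's proof: that lemma plus Theorem \ref{Cor: ConjB}. The same argument shows $I\subseteq\mathrm{m}^{m_1(I)}$, hence $\binom{m_1(I)+2}{3}\le l$, for every Borel-fixed $I$ of degree $l$; you need this below.

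Everything after Step 1 goes beyond the paper. The paper never leaves the Borel-fixed setting and tacitly uses the standard fact that the maximum of $T$ is attained at a Borel-fixed point, so its ``unique'' means unique among Borel-fixed points. Your Step 2 supplies that reduction correctly: semicontinuity, contracting the support, the Borel fixed point theorem or gin, and the Hilbert-function argument. It also shows that a maximally singular point supported at one point is a translate of $\mathrm{m}^k$.

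Your fallback reading is not needed, and ``unique among one-point-supported points'' is not equivalent to ``unique up to translation'': the latter is exactly the disconnected case. Your suggested estimate closes that case for every $k$. Lemma \ref{lem: TLessU} and the proofs of Propositions \ref{prop: ULessW} and \ref{prop:WIncreasing} use nothing about the type of the degree. So, combining your degeneration argument with $m_1(I)\le m$, every point of $\Hilb^{l'}(\BA^3)$ with $\binom{m+2}{3}\le l'<\binom{m+3}{3}$ satisfies
\[
T\le W_m(l')=(2m+1)l'-2\binom{m+2}{4}\le\Big(\frac{3(m+2)}{2}-\frac{6}{m+3}\Big)l'<\frac{3(m+2)}{2}\,l'.
\]
The middle inequality uses $l'<\binom{m+3}{3}$. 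Now suppose the support of $J$ splits into pieces of degrees $l_1+\dots+l_r=\binom{k+2}{3}$ with $r\ge 2$. Each piece then has $m\le k-1$, and additivity of $T$ gives
\[
T(J)<\frac{3(k+1)}{2}\binom{k+2}{3}=\binom{k+2}{2}\binom{k+1}{2}=V\Big(\binom{k+2}{3}\Big)\le T(\mathrm{m}^k)
\]
by Lemma \ref{lem: VLessT}. So no disconnected point is maximally singular.
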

   \begin{proof}
             From \cite[Lemma 1.7]{Rezaee-23-Conjectures}, $\mathrm{m}^k$ is the only ideal of degree ${k+2\choose 3}$ with $m_1=k$. Hence, by Theorem \ref{Cor: ConjB}, we conclude that $[\mathrm{m}^k]$ is the only point of maximum singularity in $\Hilb^{{k+2\choose 3}}(\BA^3)$.
   \end{proof}

\bibliographystyle{amsplain-nodash}

\bibliography{bib}

\vspace{0.25cm}

\end{document}

%% file: macros.tex
\usepackage[colorinlistoftodos]{todonotes}

    \usepackage[bookmarks=false]{hyperref}
\definecolor{antiquewhite}{rgb}{0.98, 0.92, 0.84}
\definecolor{buff}{rgb}{0.94, 0.86, 0.51}
\definecolor{palecopper}{rgb}{0.85, 0.54, 0.4}
\definecolor{fluorescentyellow}{rgb}{0.8, 1.0, 0.0}
\definecolor{bole}{rgb}{0.47, 0.27, 0.23}

\usepackage{amsmath,float}
\usetikzlibrary{decorations.markings}

\usepackage[vcentermath]{youngtab}
\input xy
\xyoption{all}

\usepackage{pgfkeys}
\usepackage{pgfopts}
\usepackage{ytableau}

\definecolor{cornellred}{rgb}{0.7, 0.11, 0.11}
\definecolor{britishracinggreen}{rgb}{0.0, 0.26, 0.15}
\definecolor{cobalt}{rgb}{0.0, 0.28, 0.67}
\DeclareSymbolFont{usualmathcal}{OMS}{cmsy}{m}{n}
\DeclareSymbolFontAlphabet{\mathcal}{usualmathcal}

\newcommand{\BA}{{\mathbb{A}}}

\newcommand{\BC}{{\mathbb{C}}}

\newcommand{\BN}{{\mathbb{N}}}

\newcommand{\BZ}{{\mathbb{Z}}}

\newcommand{\FC}{{\mathfrak{C}}}

\newcommand{\FJ}{{\mathfrak{J}}}

\DeclareMathOperator{\Hilb}{Hilb}

\DeclareFontFamily{OT1}{rsfs}{}
\DeclareFontShape{OT1}{rsfs}{n}{it}{<-> rsfs10}{}
\DeclareMathAlphabet{\curly}{OT1}{rsfs}{n}{it}
\renewcommand\hom{\mathscr{H}\kern-0.3em\mathit{om}}

\DeclareMathOperator{\lHom}{\mathscr{H}\kern-0.3em\mathit{om}}
\DeclareMathOperator{\RRlHom}{\mathbf{R}\kern-0.025em\mathscr{H}\kern-0.3em\mathit{om}}

\DeclareMathOperator{\lExt}{{\mathscr{E}\kern-0.2em\mathit{xt}}}



\usepackage[all]{xy}
\usepackage{tikz}
\usepackage{tikz-cd}
\usepackage{adjustbox}
\usepackage{rotating}
\usepackage{comment}

\usetikzlibrary{matrix,shapes,intersections,arrows,decorations.pathmorphing}
\tikzset{commutative diagrams/arrow style=math font}
\tikzset{commutative diagrams/.cd,
mysymbol/.style={start anchor=center,end anchor=center,draw=none}}

\tikzset{
shift up/.style={
to path={([yshift=#1]\tikztostart.east) -- ([yshift=#1]\tikztotarget.west) \tikztonodes}
}
}

\theoremstyle{definition}

\newtheorem*{lemma*}{Lemma}
\newtheorem*{theorem*}{Theorem}
\newtheorem*{example*}{Example}
\newtheorem*{fact*}{Fact}
\newtheorem*{notation*}{Notation}
\newtheorem*{definition*}{Definition}
\newtheorem*{prop*}{Proposition}
\newtheorem*{remark*}{Remark}
\newtheorem*{corollary*}{Corollary}

\newtheorem*{conventions*}{Conventions}

\newtheorem{definition}{Definition}[section]

\newtheorem{remark}[definition]{Remark}

\newtheoremstyle{thm} 
        {3mm}
        {3mm}
        {\slshape}
        {0mm}
        {\bfseries}
        {.}
        {1mm}
        {}
\theoremstyle{thm}
\newtheorem{theorem}[definition]{Theorem}
\newtheorem{corollary}[definition]{Corollary}
\newtheorem{lemma}[definition]{Lemma}

\newtheorem{prop}[definition]{Proposition}

\newtheoremstyle{ex} 
        {3mm}
        {3mm}
        {}
        {0mm}
        {\scshape}
        {.}
        {1mm}
        {}
\theoremstyle{ex}

\newtheoremstyle{sol} 
        {3mm}
        {3mm}
        {}
        {0mm}
        {\scshape}
        {.}
        {1mm}
        {}
\theoremstyle{sol}

\usepackage{tikz}
\usepackage{xparse}

\newcount\tableauRow
\newcount\tableauCol

\newenvironment{Tableau}[1]{%
  \tikzpicture[scale=0.5,draw/.append style={thick,black},
                      baseline=(current bounding box.center)]
    \tableauRow=-1.5
    \foreach \Row in {#1} {
       \tableauCol=0.5
       \foreach\k in \Row {
         \draw[thin](\the\tableauCol,\the\tableauRow)rectangle++(1,1);
         \draw[thin](\the\tableauCol,\the\tableauRow)+(0.5,0.5)node{$\k$};

                  \global\advance\tableauCol by 1
       }
       \global\advance\tableauRow by -1
    }
}{\endtikzpicture}

\newtheorem*{Acknowledgments*}{Acknowledgments}